\title{Mean curvature of direct image bundles 
}
\author{Kuang-Ru Wu}
\begin{document}

\date{}

\parskip=6pt

\maketitle

\begin{abstract}

Let $E\to X$ be a vector bundle of rank $r$ over a compact complex manifold $X$ of dimension $n$. It is known that if the line bundle $O_{P(E^*)}(1)$ over the projectivized bundle $P(E^*)$ is positive, then $E\otimes \det E$ is Nakano positive by the work of Berndtsson. In this paper, we give a subharmonic analogue. Let $p:P(E^*)\to X$ be the projection and $\alpha$ be a K\"ahler form on $X$. If the line bundle $O_{P(E^*)}(1)$ admits a metric $h$ with curvature $\Theta$ positive on every fiber and $\Theta^r\wedge p^*\alpha^{n-1}> 0$, then $E\otimes \det E$ carries a Hermitian metric whose mean curvature is positive. 

As an application, we show that the following subharmonic analogue of the Griffiths conjecture is true: if the line bundle $O_{P(E^*)}(1)$ admits a metric $h$ with curvature $\Theta$ positive on every fiber and $\Theta^r\wedge p^*\alpha^{n-1}> 0$, then $E$ carries a Hermitian metric with positive mean curvature.

\end{abstract}

\section{Introduction}

The main goal of the paper is to establish some positivity results about the mean curvature of the direct image bundles for holomorphic fibrations and use these results to prove the subharmonic analogue of the Griffiths conjecture. Let us begin with the case of trivial fibrations. Consider a product $D\times X$ where $D$ is a bounded open set in $\mathbb{C}^m$ and $X$ is a compact complex manifold of dimension $n$. Let $(L,h)$ be a Hermitian line bundle over $X$ with positive curvature $\Theta(h):=\omega>0$. For a function $\phi:D\times X \to \mathbb{R}$, we define a Hermitian metric $H$ on the trivial bundle $V:=D\times H^0(X,L\otimes K_X)\to D$ as follows. For $s\in V_t$ with $t\in D$,  
\begin{equation}\label{metric 1}
  H(s,s):=\int_X h(s,s)e^{-\phi(t,\cdot)}.  
\end{equation}
Here we extend the metric $h$ to acting on sections $s$ of $L\otimes K_X$ so that $h(s,s)$ is an $(n,n)$-form on $X$. In terms of local coordinates in $X$, if $s=\sigma\otimes e$ with $\sigma$ an $(n,0)$-form and $e$ a frame of $L$, then $h(s,s)=c_n\sigma \wedge \bar{\sigma} h(e,e)$ where $c_n=i^{n^2}$.

When the function $\phi$ satisfies $\pi_2^*\omega+i\partial\Bar{\partial} \phi>0$ where $\pi_2:D\times X \to X$ is the projection, the Hermitian vector bundle $(V,H)$ is Nakano positive by the work of Berndtsson \cite{Berndtsson09}. In order to state our results, let us fix a positive Hermitian form $\alpha$ on $D$ and denote by $\pi_1$ the projection $D\times X \to D$. We have the following subharmonic analogue of Berndtsson's theorem.

\begin{theorem}\label{thm 1}
    If $\phi\in C^2(D\times X)$, $\omega+i\partial\Bar{\partial}\phi(t,\cdot)>0$ for any $t\in D$, and $(\pi_2^*\omega+i\partial\Bar{\partial} \phi)^{n+1} \wedge \pi_1^*\alpha^{m-1}> 0$, then the bundle $(V,H)$ has positive mean curvature $\Lambda_\alpha\Theta^V> 0$. 
The semipositive case is also true: if $(\pi_2^*\omega+i\partial\Bar{\partial} \phi)^{n+1} \wedge \pi_1^*\alpha^{m-1}\geq 0$, then $\Lambda_\alpha\Theta^V\geq  0$.
\end{theorem}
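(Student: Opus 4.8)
The plan is to start from Berndtsson's curvature formula for the $L^2$-metric $H$ and then take its trace against $\alpha$. Write $\tilde\omega:=\pi_2^*\omega+i\partial\bar\partial\phi$ for the curvature form on $D\times X$ of the line bundle $\pi_2^*L$ equipped with the metric $he^{-\phi}$; the fiberwise hypothesis $\omega+i\partial\bar\partial\phi(t,\cdot)>0$ says exactly that $\tilde\omega$ is positive in the fiber directions, which is what makes the $L^2$-metric and Berndtsson's formula available. For a holomorphic section $u$ of $V$ (a family $u_t\in H^0(X,L\otimes K_X)$) and base directions $\partial_{t_j}$, let $\delta_j$ be the horizontal lift of $\partial_{t_j}$ determined by $\tilde\omega$, i.e. the unique lift that is $\tilde\omega$-orthogonal to the fiber. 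Berndtsson's formula then expresses the curvature matrix as an identity
\begin{equation*}
\langle \Theta^V_{j\bar k}u,u\rangle=\int_X \tilde\omega(\delta_j,\bar\delta_k)\,|u|^2 + N_{j\bar k},
\end{equation*}
where $|u|^2$ is the pointwise fiber density attached to $u$ and $(N_{j\bar k})$ is the Gram matrix of a second-fundamental-form term, hence positive semidefinite. This identity holds under the fiberwise hypothesis alone, which is what will let me treat the semipositive case on the same footing.

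Contracting with $\alpha^{j\bar k}$ gives
\begin{equation*}
\langle \Lambda_\alpha\Theta^V u,u\rangle=\int_X (\Lambda_\alpha S)\,|u|^2 + \Lambda_\alpha N,
\end{equation*}
where $S_{j\bar k}:=\tilde\omega(\delta_j,\bar\delta_k)$ is the geodesic curvature. Since $(N_{j\bar k})$ is positive semidefinite and $\alpha^{-1}$ is positive definite, the second term equals $\mathrm{tr}(\alpha^{-1}N)\geq 0$, so everything reduces to controlling the sign of $\Lambda_\alpha S$. The point is that $S$ is precisely the Schur complement of the fiber block of $\tilde\omega$: in local coordinates splitting the tangent space into fiber and base directions and writing $\tilde\omega=\left(\begin{smallmatrix}A&B\\B^*&C\end{smallmatrix}\right)$ with $A$ the (positive-definite) fiber block, one has $S=C-B^*A^{-1}B$, and the orthogonality defining $\delta_j$ makes the $\tilde\omega$-inner product of $\delta_j\wedge u$ and $\delta_k\wedge u$ factor as $S_{j\bar k}\,|u|^2$.

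The decisive step is a pointwise linear-algebra identity translating the top-degree hypothesis into positivity of $\Lambda_\alpha S$. I would prove that
\begin{equation*}
\tilde\omega^{\,n+1}\wedge\pi_1^*\alpha^{m-1}=c_{n,m}\,\det(A)\,(\Lambda_\alpha S)\,dV
\end{equation*}
for a positive constant $c_{n,m}$ and a fixed positive volume form $dV$, by expanding $\tilde\omega^{n+1}$, keeping only the terms of fiber-bidegree $(n,n)$ (forced by the wedge with $\pi_1^*\alpha^{m-1}$), and checking that the mixed terms assemble exactly into the Schur complement $S$ while the fiber part produces $\det(A)$. Because the fiberwise hypothesis gives $\det(A)>0$ at every point, the hypothesis $\tilde\omega^{n+1}\wedge\pi_1^*\alpha^{m-1}>0$ is equivalent to $\Lambda_\alpha S>0$ pointwise. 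Feeding this back, the first integral is strictly positive for $u\neq0$ and the second is nonnegative, so $\langle\Lambda_\alpha\Theta^V u,u\rangle>0$; replacing $>$ by $\geq$ throughout gives the semipositive statement verbatim.

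The main obstacle I anticipate is twofold and entirely at the algebraic level: first, matching Berndtsson's curvature term with the Schur complement $S$ of $\tilde\omega$ (rather than with the raw base block $C$), which requires using that $\delta_j$ is the $\tilde\omega$-horizontal lift and that $u$ saturates the fiber directions; and second, the bookkeeping in the expansion of $\tilde\omega^{n+1}\wedge\pi_1^*\alpha^{m-1}$ that produces $\det(A)\,\Lambda_\alpha S$ with the correct positive constant. The regularity hypothesis $\phi\in C^2$ is exactly what makes $\tilde\omega$ continuous and $H$ twice differentiable in $t$, so the curvature and its trace are well defined; no extra smoothing should be needed.
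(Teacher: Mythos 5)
Your proposal follows essentially the same route as the paper: Berndtsson's curvature formula for the $L^2$-metric, reduction of the problem to the sign of the trace of the geodesic curvature (the Schur complement $S=C-B^*A^{-1}B$ of the fiber block of $\tilde\omega$), and the pointwise identity $\tilde\omega^{n+1}\wedge\pi_1^*\alpha^{m-1}=c_{n,m}\det(A)\,(\Lambda_\alpha S)\,dV$, which is exactly the paper's formula (\ref{2.1}) combined with Schur's formula applied to the matrices $M(j\bar j)$. The trace step, the determinant bookkeeping, and the remark that the semipositive case follows verbatim are all correct.

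One point, however, is stated backwards and is precisely where the analytic content of the theorem lives. You justify the sign of the correction term $N_{j\bar k}$ by saying it is ``the Gram matrix of a second-fundamental-form term, hence positive semidefinite.'' In the curvature formula for the subbundle $V\subset F=D\times L^2(X,L\otimes K_X)$, the second fundamental form enters with a \emph{minus} sign,
\[
\langle\Theta^V_{j\bar k}s,s\rangle=\int_X\phi_{j\bar k}\,h(s,s)e^{-\phi}\;-\;\langle\pi_{\perp}D^F_{t_j}s,\pi_{\perp}D^F_{t_k}s\rangle,
\]
and the first term involves the raw base block $\phi_{j\bar k}$ of $\tilde\omega$, not the Schur complement. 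The passage from this identity to the formula you write---Schur complement plus a positive semidefinite remainder---is not formal linear algebra about $\tilde\omega$-horizontal lifts; it is exactly H\"ormander's $L^2$-estimate on the fibers (this is where the fiberwise hypothesis $\omega+i\partial\bar\partial\phi(t,\cdot)>0$ is used): $\pi_\perp D^F_{t_j}s$ is the minimal solution of a fiberwise $\bar\partial$-equation, so its norm is bounded by $\int_X\sum_{\lambda,\mu}(\psi+\phi)^{\bar\lambda\mu}\phi_{j\bar\lambda}\phi_{\bar j\mu}\,h(s,s)e^{-\phi}$, and subtracting this bound from $\phi_{j\bar j}$ is what produces $S$ (the paper's kernel $K_{t_0}$). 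So the formula you invoke is a true theorem of Berndtsson and may be cited as such, but its positivity assertion is the $L^2$-estimate, not the positive semidefiniteness of a second-fundamental-form Gram matrix; if your proof is meant to establish that formula rather than cite it, this is the missing step you must supply.
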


When $m=1$, Theorem \ref{thm 1} is the same as Berndtsson's theorem. The function $\phi$ in Theorem \ref{thm 1} is a special case of subharmonicity on graphs introduced in \cite{wu23}. A more general theorem that allows for such $\phi$ will be stated in Subsection \ref{subsection 2.1}. The concept of subharmonicity on graphs and a special case of Theorem \ref{thm 1} are used in \cite{wu23} to study the approximation/quantization of the Wess--Zumino--Witten (WZW) equations in the space of K\"ahler potentials through the Hermitian--Yang--Mills equations. The WZW equation in question is $(\pi_2^*\omega+i\partial\Bar{\partial} \phi)^{n+1} \wedge \pi_1^*\alpha^{m-1}=0 $. Therefore, the weight function $\phi$ is actually a subsolution of the WZW equation. For motivation and recent developments for the WZW equation, see \cite{Semmesmonge, Donaldson99, RubinsteinZelditch, LempertLegendre,wu23,wu2024potential,finski2024WZW,finski2024lower}. 

\subsection{Nontrivial fibrations}
For the case of nontrivial fibrations, we consider a proper holomorphic map $p:\mathcal{X}^{n+m}\to Y^m$ between two complex manifolds with $\mathcal{X}$ being K\"ahler and the differential being surjective at every point. We denote the fibers $p^{-1}(t)$ by $\mathcal{X}_t$ for $t\in Y$. Let $(L,h)$ be a Hermitian line bundle over $\mathcal{X}$ and let $$V_t=H^0(\mathcal{X}_t, L|_{\mathcal{X}_t}\otimes K_{\mathcal{X}_t}).$$ We assume that $\dim V_t$ is independent of $t\in Y$ (for our application later, this assumption is satisfied). So, the direct image of the sheaf of sections of $L\otimes K_{\mathcal{X}/Y}$ is locally free by Grauert's direct image theorem. We denote by $V$ the associated vector bundle over $Y$. There is a naturally defined Hermitian metric $H$ on $V$. For $u$ in $V_t$ with $t\in Y$, 
\begin{equation}\label{metric}
  H(u,u):=\int_{\mathcal{X}_t}h(u,u).  
\end{equation}
As in the trivial fibration, we extend the metric $h$ to acting on sections $u$ of $L|_{\mathcal{X}_t}\otimes K_{\mathcal{X}_t}$ so that $h(u,u)$ is an $(n,n)$-form on $\mathcal{X}_t$. In terms of local coordinates, if $u=u'\otimes e$ with $u'$ an $(n,0)$-form and $e$ a frame of $L|_{\mathcal{X}_t}$, then $h(u,u)=c_n u' \wedge \overline{u'} h(e,e)$ where $c_n=i^{n^2}$.

When the metric $h$ has positive curvature, the bundle $(V, H)$ is Nakano positive by the work of Berndtsson \cite{Berndtsson09}. If we let $\alpha$ be a positive Hermitian form on $Y$, then we have the following subharmonic analogue which is the same as Berndtsson's theorem when $m=1$.

\begin{theorem}\label{thm 4}
    If the curvature $\Theta$ of $h$ is positive on every fiber and $\Theta^{n+1}\wedge p^*\alpha^{m-1}>0 $, then $(V, H)$ has positive mean curvature $\Lambda_\alpha \Theta^V> 0$. 
The semipositive case is also true: if $\Theta^{n+1}\wedge p^*\alpha^{m-1}\geq 0$, then $\Lambda_\alpha \Theta^V\geq  0$.
\end{theorem}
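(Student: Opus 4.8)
The plan is to derive Theorem~\ref{thm 4} from Berndtsson's curvature formula for the direct image, and then contract that formula with $\alpha$. Fix $t_0\in Y$ and $u\in V_{t_0}$ with $u\neq 0$, extend $u$ to a local holomorphic section $t\mapsto u_t$ of $V$ near $t_0$, and choose a smooth $L$-valued $(n,0)$-form $\tilde u$ on $\mathcal{X}$ restricting to $u_t$ on each fiber $\mathcal{X}_t$. Because $\Theta|_{\mathcal{X}_t}>0$ for every $t$, we may define the horizontal lift $V_j$ of $\partial/\partial t_j$ by requiring $\Theta(V_j,\bar W)=0$ for every vertical $(0,1)$-vector $W$; this is exactly the lift that enters Berndtsson's formula. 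That formula \cite{Berndtsson09}, which applies here since $h$ has fiberwise positive curvature, expresses the curvature $\Theta^V$ at $t_0$ as
\[
\langle \Theta^V_{j\bar k}u,u\rangle = \int_{\mathcal{X}_{t_0}} \Theta(V_j,\bar V_k)\, h(\tilde u,\tilde u) + R_{j\bar k},
\]
where $h(\tilde u,\tilde u)$ denotes the $(n,n)$-form on $\mathcal{X}_{t_0}$ obtained by the convention of \eqref{metric} and the remainder $(R_{j\bar k})$ is a positive-semidefinite Hermitian matrix (the manifestly nonnegative Hodge-theoretic term).

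Applying $\Lambda_\alpha$ amounts to pairing the matrix $(\,\cdot\,)_{j\bar k}$ with the positive-definite tensor $\alpha^{-1}$, an operation that preserves the Loewner order on Hermitian matrices and sends positive-semidefinite matrices to nonnegative numbers; hence
\[
\langle \Lambda_\alpha\Theta^V u,u\rangle \geq \int_{\mathcal{X}_{t_0}} (\Lambda_\alpha\Theta_{\mathrm{h}})\, h(\tilde u,\tilde u),
\qquad \Lambda_\alpha\Theta_{\mathrm{h}}:=\sum_{j,k}\alpha^{j\bar k}\,\Theta(V_j,\bar V_k).
\]
The key point is then a pointwise identity on $\mathcal{X}$,
\[
\Theta^{n+1}\wedge p^*\alpha^{m-1} = c_{n,m}\,(\Lambda_\alpha\Theta_{\mathrm{h}})\,(\Theta|_{\mathcal{X}_t})^{n}\wedge p^*\alpha^{m},
\]
with $c_{n,m}>0$ a combinatorial constant, which one checks by passing to a frame adapted to the splitting of $T\mathcal{X}$ into the vertical bundle and the span of the $V_j$: the defining property of $V_j$ kills the mixed components of $\Theta$, so in $\Theta^{n+1}$ only the term with exactly one ``base'' $(1,1)$-factor survives after wedging with $p^*\alpha^{m-1}$, and that factor contributes precisely $\Lambda_\alpha\Theta_{\mathrm{h}}$. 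Since $\Theta|_{\mathcal{X}_t}>0$, the form $(\Theta|_{\mathcal{X}_t})^{n}$ is a positive fiber volume form, so $\Theta^{n+1}\wedge p^*\alpha^{m-1}>0$ is equivalent to $\Lambda_\alpha\Theta_{\mathrm{h}}>0$ everywhere on $\mathcal{X}$ (and to $\geq 0$ in the semipositive case).

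It then remains to combine the pieces: $h(\tilde u,\tilde u)$ is a nonnegative $(n,n)$-form on $\mathcal{X}_{t_0}$ which is not identically zero because $\tilde u|_{\mathcal{X}_{t_0}}=u\neq 0$, so under the hypothesis $\Theta^{n+1}\wedge p^*\alpha^{m-1}>0$ the integral $\int_{\mathcal{X}_{t_0}}(\Lambda_\alpha\Theta_{\mathrm{h}})\,h(\tilde u,\tilde u)$ is strictly positive; this gives $\langle\Lambda_\alpha\Theta^V u,u\rangle>0$ for all $u\neq 0$, i.e. $\Lambda_\alpha\Theta^V>0$, and the semipositive statement is identical with ``$>$'' replaced by ``$\geq$''. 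I expect the main obstacle to be the pointwise identity above: one has to handle carefully the fact that $\Theta$ is positive only along the fibers, so the adapted frame and the horizontal lifts must be built from $\Theta|_{\mathcal{X}_t}$ while still keeping track of all $m$ base directions, and one must also verify that the nonnegative remainder $R_{j\bar k}$ in Berndtsson's formula — being a Gram-type matrix — stays nonnegative after contraction with $\alpha^{-1}$.
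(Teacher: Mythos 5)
Your proposal is correct and follows essentially the same route as the paper: both reduce to Berndtsson's curvature computation, writing $\Lambda_\alpha\Theta^V$ as a nonnegative (primitive-form/Gram) remainder plus an integral controlled by the $\Theta$-horizontal component, and both convert the hypothesis $\Theta^{n+1}\wedge p^*\alpha^{m-1}>0$ into $\Lambda_\alpha\Theta_{\mathrm{h}}>0$ via exactly the adapted-frame/Schur-complement identity you state. The paper just carries out the step you black-box explicitly — choosing the special representative with $\partial^\phi u'=0$ and primitive $\eta_j$ as in \cite[Proposition 4.2]{Berndtsson09}, and completing the square in (\ref{4.8})--(\ref{3.12}) to exhibit the Schur complement — so the two arguments coincide in substance.
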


As in \cite{BerndtssonPaun}, one can define the relative Bergman kernel metric $B$ on the line bundle $L\otimes K_{\mathcal{X}/Y}$ where $K_{\mathcal{X}/Y}$ is the relative canonical bundle (the relevent definitions will be recalled in Subsection \ref{sub 3.1}). As a consequence of Theorem \ref{thm 4}, the curvature of $B$ has the same  positivity as the curvature of $h$: 
\begin{corollary}\label{cor relative} 
 If the curvature $\Theta$ of $h$ is positive on every fiber and $\Theta^{n+1}\wedge p^*\alpha^{m-1}\geq 0$, then the curvature $\Theta(B)$ of the relative Bergman kernel metric $B$ on $L\otimes K_{\mathcal{X}/Y}$ satisfies $\Theta(B)^{n+1}\wedge p^*\alpha^{m-1}\geq 0$.
\end{corollary}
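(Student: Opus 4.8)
The plan is to deduce the corollary from Theorem~\ref{thm 4} by realizing $L\otimes K_{\mathcal{X}/Y}$ as a quotient line bundle of the pullback bundle $p^*V\to\mathcal{X}$ and comparing curvatures. Recall from Subsection~\ref{sub 3.1} that $K_{\mathcal{X}/Y}$ restricts to $K_{\mathcal{X}_t}$ on each fiber, so $V_t=H^0(\mathcal{X}_t,(L\otimes K_{\mathcal{X}/Y})|_{\mathcal{X}_t})$ and the evaluation map $\mathrm{ev}\colon p^*V\to L\otimes K_{\mathcal{X}/Y}$, $(x,u)\mapsto u(x)$, is a morphism of holomorphic vector bundles over $\mathcal{X}$, surjective off a proper analytic subset $Z$ (the fiberwise base locus). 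By the extremal characterization of the relative Bergman kernel metric (for $u\in V_t$ and $x\in\mathcal{X}_t$ one has $\|u(x)\|_B^2\le\int_{\mathcal{X}_t}h(u,u)$, with equality when $u$ is the Bergman kernel $K_x$), $B$ is precisely the quotient metric induced by $(p^*V,p^*H)$ on $L\otimes K_{\mathcal{X}/Y}$ over $\mathcal{X}\setminus Z$. I would prove the asserted inequality on $\mathcal{X}\setminus Z$; as in \cite{BerndtssonPaun} it then extends across $Z$, where $B$ is only a singular metric.

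Over $\mathcal{X}\setminus Z$, let $S=\ker\mathrm{ev}\subset p^*V$ and let $K_x\in(p^*V)_x$ be a generator of $S_x^{\perp}$ (the Bergman kernel section). The standard curvature formula for a quotient line bundle gives, as real $(1,1)$-forms,
\[
\Theta(B)\;=\;\frac{i\langle p^*\Theta^V K_x,K_x\rangle_{p^*H}}{\|K_x\|^2_{p^*H}}\;+\;|\beta|^2\;\ge\;\tau:=\frac{i\langle p^*\Theta^V K_x,K_x\rangle_{p^*H}}{\|K_x\|^2_{p^*H}},
\]
where $|\beta|^2\ge0$ is built from the second fundamental form of $S\subset p^*V$; restricting to a fiber, where $p^*\Theta^V$ vanishes, yields $\Theta(B)|_{T\mathcal{X}_t}=|\beta|^2|_{T\mathcal{X}_t}\ge0$. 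Evaluated at a point $x$, the form $\tau$ factors through $p_*\colon T_x\mathcal{X}\to T_{p(x)}Y$, namely it is the pullback of the Hermitian form $\tau^Y_x:=i\langle\Theta^V K_x,K_x\rangle/\|K_x\|^2$ on $T_{p(x)}Y$. Since $\Theta$ is positive on every fiber and $\Theta^{n+1}\wedge p^*\alpha^{m-1}\ge0$, Theorem~\ref{thm 4} gives $\Lambda_\alpha\Theta^V\ge0$, and hence $\Lambda_\alpha\tau^Y_x=\langle i\Lambda_\alpha\Theta^V K_x,K_x\rangle/\|K_x\|^2\ge0$.

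What remains is a pointwise linear-algebra statement: for Hermitian forms $\eta\ge\tau$ on $T_x\mathcal{X}$ such that $\eta|_{T_x\mathcal{X}_t}\ge0$ and $\tau$ is the pullback under $p_*$ of a form $\tau^Y$ on $T_{p(x)}Y$ with $\Lambda_\alpha\tau^Y\ge0$, one has $\eta^{\,n+1}\wedge p^*\alpha^{m-1}\ge0$ at $x$. To see this I would choose coordinates so that $p$ is a linear projection and $\alpha$ is Euclidean at $p(x)$, write $p^*\alpha^{m-1}=(m-1)!\sum_{l}\prod_{l'\ne l}(i\,dt_{l'}\wedge d\bar t_{l'})$, and compute that $\eta^{\,n+1}\wedge p^*\alpha^{m-1}$ equals a positive constant times $\sum_{l}\det(\eta|_{W_l})$, where $W_l=T_x\mathcal{X}_t\oplus\mathbb{C}v_l$ and $\{p_*v_l\}$ is an $\alpha$-orthonormal basis of $T_{p(x)}Y$. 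If $\eta|_{T_x\mathcal{X}_t}>0$, the Schur complement gives $\det(\eta|_{W_l})=\det(\eta|_{T_x\mathcal{X}_t})\cdot\eta(\widetilde v_l,\overline{\widetilde v_l})$, where $\widetilde v_l$ is the lift of $p_*v_l$ that is $\eta$-orthogonal to $T_x\mathcal{X}_t$; since $\eta\ge\tau$ and $p_*\widetilde v_l=p_*v_l$, summing over $l$ yields $\sum_l\det(\eta|_{W_l})\ge\det(\eta|_{T_x\mathcal{X}_t})\cdot\Lambda_\alpha\tau^Y\ge0$. The degenerate case is handled by replacing $\eta$ with $\eta+\varepsilon\,i\sum_j dz_j\wedge d\bar z_j$ and letting $\varepsilon\downarrow0$. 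Applying this with $\eta=\Theta(B)$ at each point of $\mathcal{X}\setminus Z$ proves the corollary.

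The hard part is this last linear-algebraic step: translating the \emph{trace-type} positivity $\Lambda_\alpha\Theta^V\ge0$ on $Y$ into the \emph{mixed Monge--Amp\`ere-type} positivity $\Theta(B)^{n+1}\wedge p^*\alpha^{m-1}\ge0$ on $\mathcal{X}$, where both outputs of the quotient construction — that $\Theta(B)$ dominates the pullback form $\tau$, and that $\Theta(B)$ is semipositive along the fibers — are genuinely needed (note that $\Theta(B)$ itself need not be semipositive when $m\ge2$, unlike in \cite{BerndtssonPaun}). A secondary, purely technical, point is making sense of the inequality along the base locus $Z$, where $B$ is only a singular metric; this is handled exactly as in \cite{BerndtssonPaun}.
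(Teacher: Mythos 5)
Your proposal is correct, and it reaches the conclusion by a genuinely different route than the paper. The paper works dually: it fixes a holomorphic map $f$ into the fibers, packages evaluation along the graph $t\mapsto (t,f(t))$ as a holomorphic section $\xi$ of $V^*$ with $\|\xi\|^2_{H^*}=e^{\psi(t,f(t))}$, invokes Theorem \ref{thm 4} together with \cite[Theorem 4.1]{CofimanSemmes} to conclude that $\log\|\xi\|^2_{H^*}$ is $\alpha$-subharmonic, hence that the local weight $\psi$ of $B$ is subharmonic on graphs, and then converts this to $\Theta(B)^{n+1}\wedge p^*\alpha^{m-1}\geq 0$ via a variant of Lemma \ref{+ det} after adding $\varepsilon\|z\|^2$ to restore fiberwise strict positivity. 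You instead work on the primal side: you realize $B$ as the quotient metric on $L\otimes K_{\mathcal{X}/Y}$ under the evaluation morphism $p^*V\to L\otimes K_{\mathcal{X}/Y}$, so the quotient curvature formula gives both $\Theta(B)\geq p^*$(a form controlled by $\Lambda_\alpha\Theta^V\geq 0$ from Theorem \ref{thm 4}) and fiberwise semipositivity, and you finish with a pointwise Schur-complement inequality. The two arguments rest on the same two inputs (Theorem \ref{thm 4} and the positivity coming from the Bergman/evaluation construction) and the same Schur-complement identity that underlies formula (\ref{2.1}); what yours buys is that it bypasses the subharmonicity-on-graphs formalism and the Coifman--Semmes theorem entirely, and replaces the regularization of the (a priori non-smooth) weight $\psi+\varepsilon\|z\|^2$ by a transparent pointwise perturbation $\eta+\varepsilon\, i\sum dz_j\wedge d\bar z_j$ in the linear-algebra step. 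Both proofs share the same technical caveat along the fiberwise base locus, where $B$ is only a singular metric and the inequality must be interpreted accordingly; you defer this to \cite{BerndtssonPaun}, which is at the same level of rigor as the paper's treatment.
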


\subsection{Projectivized bundles}

One example of the nontrivial fibration is $p:P(E^*)\to X$ where $E$ is a holomorphic vector bundle of rank $r$ over a compact complex manifold $X$ of dimension $n$ and $P(E^*)$ is the projectivized bundle of $E^*$. Let $h$ be a metric on $O_{P(E^*)}(1)$ and $\alpha$ be a K\"ahler form on $X$. As a corollary of Theorem \ref{thm 4}, we have
\begin{theorem}\label{thm EdetE}
     Assume the curvature $\Theta$ of $h$ is positive on every fiber. 
     \begin{enumerate}
         \item If $\Theta^{r}\wedge p^*\alpha^{n-1}\geq 0 
 (\text{or} >0)$, then $(S^k E\otimes \det E, H)$ has semipositive (or positive) mean curvature for any $k\geq 0$. 
         \item 
         If $\Theta^{r}\wedge p^*\alpha^{n-1}> 0$, then 
         $(S^k E,H)$ has positive mean curvature for $k$ large.
      \end{enumerate}
\end{theorem}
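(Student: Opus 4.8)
The plan is to deduce both statements from Theorem~\ref{thm 4} by exhibiting $S^kE\otimes\det E$ and $S^kE$ as direct image bundles $p_*\big(\mathcal L\otimes K_{P(E^*)/X}\big)$ over $X$ for a suitable line bundle $\mathcal L$ on $P(E^*)$, endowed with the $L^2$-metric \eqref{metric}. The fibers of $p\colon P(E^*)\to X$ are copies of $\mathbb P^{r-1}$, so in the notation of Theorem~\ref{thm 4} the fiber dimension is $r-1$ and the base dimension is $n$, and the cohomological hypothesis there reads $\Theta(\mathcal L)^{r}\wedge p^*\alpha^{n-1}>0$. The structural input is the relative Euler sequence $0\to\mathcal O\to p^*E^*\otimes O_{P(E^*)}(1)\to T_{P(E^*)/X}\to0$, whose determinant gives $K_{P(E^*)/X}=O_{P(E^*)}(-r)\otimes p^*\det E$; combined with $p_*O_{P(E^*)}(j)=S^jE$ for $j\ge0$ and the canonical identification $K_{P(E^*)/X}|_{\mathcal X_t}\cong K_{\mathcal X_t}$, this realizes the bundles above as claimed. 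I also need $P(E^*)$ to be K\"ahler: since $\Theta$ is positive along the fibers, $\alpha$ is K\"ahler on $X$, and $P(E^*)$ is compact, a standard Schur-complement-plus-compactness argument shows $\Theta+Np^*\alpha>0$ for $N$ large.

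For part~(1), take $\mathcal L=O_{P(E^*)}(k+r)$ with the metric $h^{k+r}$. Then $\mathcal L\otimes K_{P(E^*)/X}=O_{P(E^*)}(k)\otimes p^*\det E$, so $p_*\big(\mathcal L\otimes K_{P(E^*)/X}\big)=S^kE\otimes\det E$ with its $L^2$-metric $H$. Since $k+r>0$, the curvature $\Theta(\mathcal L)=(k+r)\Theta$ is positive on every fiber, and $\big((k+r)\Theta\big)^{r}\wedge p^*\alpha^{n-1}=(k+r)^r\,\Theta^{r}\wedge p^*\alpha^{n-1}$ has exactly the sign of $\Theta^{r}\wedge p^*\alpha^{n-1}$. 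Theorem~\ref{thm 4} then gives $\Lambda_\alpha\Theta^{S^kE\otimes\det E}>0$ (resp.\ $\ge0$), which is the assertion; the case $k=0$ recovers $\det E$.

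For part~(2), fix any smooth metric $g$ on $\det E$, let $\beta:=\Theta(\det E,g)$ be its curvature $(1,1)$-form on $X$, and take $\mathcal L=O_{P(E^*)}(k+r)\otimes p^*(\det E)^{-1}$ with the metric $h^{k+r}\otimes p^*g^{-1}$, whose curvature is $\Theta(\mathcal L)=(k+r)\Theta-p^*\beta$. Now $\mathcal L\otimes K_{P(E^*)/X}=O_{P(E^*)}(k)$, so $p_*\big(\mathcal L\otimes K_{P(E^*)/X}\big)=S^kE$ with its $L^2$-metric (for any such auxiliary choice of $g$). On each fiber $p^*\beta$ vanishes, so $\Theta(\mathcal L)$ is still fiberwise positive. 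For the cohomological condition, expand the $r$-th power: every term containing $(p^*\beta)^{\,j}$ with $j\ge2$ dies because $\beta^{\,j}\wedge\alpha^{n-1}$ has degree $>2n$ on $X$, leaving only $j=0,1$, so that
\[
\big((k+r)\Theta-p^*\beta\big)^{r}\wedge p^*\alpha^{n-1}=(k+r)^{r-1}\Big[(k+r)\,\Theta^{r}\wedge p^*\alpha^{n-1}-r\,\Theta^{r-1}\wedge p^*(\beta\wedge\alpha^{n-1})\Big].
\]
Because $\Theta^{r}\wedge p^*\alpha^{n-1}$ is a strictly positive volume form on the compact manifold $P(E^*)$ while $\Theta^{r-1}\wedge p^*(\beta\wedge\alpha^{n-1})$ is a fixed bounded top-degree form, the bracket is positive once $k$ is large; for such $k$, Theorem~\ref{thm 4} yields $\Lambda_\alpha\Theta^{S^kE}>0$.

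These verifications are routine; the points requiring attention are the choice of twist $\mathcal L$ making the direct image exactly $S^kE$ (resp.\ $S^kE\otimes\det E$) and the bookkeeping of $K_{P(E^*)/X}$ with the sign convention for $O_{P(E^*)}(1)$. The one essential point — and the reason part~(2) requires ``$k$ large'' rather than all $k$ — is the displayed identity: subtracting $p^*\beta$ perturbs the positivity condition only by the single bounded term $-r\,\Theta^{r-1}\wedge p^*(\beta\wedge\alpha^{n-1})$, which is absorbed by the growing main term precisely when $k$ is large.
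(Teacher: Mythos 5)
Your proposal is correct and follows essentially the same route as the paper: part (1) via $L=O_{P(E^*)}(k+r)$ with metric $h^{k+r}$ and the identity $K_{P(E^*)/X}\cong O_{P(E^*)}(-r)\otimes p^*\det E$, and part (2) via $L=O_{P(E^*)}(k)\otimes K_{P(E^*)/X}^{-1}$ with an auxiliary metric, both fed into Theorem~\ref{thm 4}. Your explicit expansion showing that the $p^*\beta$ perturbation contributes only one bounded term (absorbed for $k$ large) and your compactness argument that $\Theta+Np^*\alpha$ is K\"ahler are just worked-out versions of details the paper leaves implicit or relegates to Subsection~\ref{subsection 5.2}.
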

The vector bundle $V$ in Theorem \ref{thm 4} is associated with the direct image of $L\otimes K_{\mathcal{X}/Y}$, therefore the first statement in Theorem \ref{thm EdetE} follows when we  use $O_{P(E^*)}(r+k)$ for the line bundle $L$ and the fact that the relative canonical bundle
$K_{P(E^*)/X}$ is isomorphic to $O_{P(E^*)}(-r)\otimes p^*\det E$. For the second statement, we use $O_{P(E^*)}(k)\otimes K^{-1}_{P(E^*)/X}$ for $L$ and equip $K^{-1}_{P(E^*)/X}$ with some arbitrary metric. In order to use Theorem \ref{thm 4}, we have to make sure $P(E^*)$ is K\"ahler in the present setup. This fact is perhaps known to the experts, but we still provide a proof in Subsection \ref{subsection 5.2} (see also \cite[Theorem 8]{Kodaira}).

In \cite{Griff69}, Griffiths conjectured that if $O_{P(E^*)}(1)$ is positive, then $E$ admits a Hermitian metric with Griffiths positive curvature. The conjecture is still open, and the closest result so far is perhaps the one due to Berndtsson \cite{Berndtsson09} where he shows that $E\otimes \det E$ is Nakano positive. For the progress on the Griffiths conjecture, see  \cite{Umemura,CampanaFlenner,Berndtsson09,MourouganeTaka,toric,positivityandvanishingthmliu,naumann2017approach,FengLiuWan,demailly2020hermitianyangmills,pingali2021note,Finskichara, wu_2022,wupositivelyII,wuIII,Mazhang,lempert2024two}. Let us formulate the relation between ampleness and positivity a bit further:
\begin{enumerate}
    \item\label{1} $E$ admits a Hermitian metric with
Griffiths positive curvature.    \item\label{2} $O_{P(E^*)}(1)$ is positive.
    \item\label{3}  $O_{P(E)}(1)$ admits a Hermitian metric whose curvature has signature $(r-1,n)$ and is positive on every fiber.
\end{enumerate}
Statement \ref{1} implies both \ref{2} and \ref{3} by a standard computation which will be recalled later. Statement \ref{3} is equivalent to $E$ being Kobayashi positive, and it implies statement \ref{2} (see \cite{FengLiuWan,wu_2022}). The direction from \ref{2} to \ref{1} and the one from \ref{2} to \ref{3} are respectively the Griffiths conjecture and the Kobayashi conjecture. Following Griffiths and Kobayashi, we ask whether the statements below are equivalent: \begin{enumerate}[label=\Alph*.]

\item\label{a} $E$ admits a Hermitian metric with positive mean curvature.

         \item\label{b} $O_{P(E^*)}(1)$ admits a Hermitian metric whose curvature $\Theta$ is positive on every fiber and $\Theta^{r}\wedge p^*\alpha^{n-1}> 0$.

      \item\label{c}
      $O_{P(E)}(1)$ admits a Hermitian metric whose curvature $\Theta$ is positive on every fiber and $\Theta^{r}\wedge q^*\alpha^{n-1}<  0$, where $q:P(E)\to X$ is the projection.
      \end{enumerate}
We remark that when the dimension of $X$ is one, statements \ref{1}, \ref{2}, and \ref{3} are statements A, B, and C respectively, and in this case, all the statements are equivalent by \cite{Umemura,CampanaFlenner}. 

As we indicated earlier, Berndtsson's positivity theorem is not enough to prove the Griffiths conjecture. However, somewhat surprisingly, the subharmonic analogue of the Griffiths conjecture (namely, the equivalence of A, B, and C)  can be proved using Theorem \ref{thm EdetE} combined with recent developments in Hermitian--Yang--Mills metrics. This is one of the motivations of establishing Theorem \ref{thm EdetE}. Indeed, we have

\begin{theorem}\label{thm 5}
  Statements A, B, and C are equivalent.    
\end{theorem}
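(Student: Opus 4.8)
The strategy is to establish the four implications $A\Rightarrow B$, $A\Rightarrow C$, $B\Rightarrow A$, $C\Rightarrow A$; the remaining equivalences $B\Leftrightarrow C$ then follow by composition. The implications $A\Rightarrow B$ and $A\Rightarrow C$ form the soft half and come from a single curvature computation for an induced metric. The implications $B\Rightarrow A$ and $C\Rightarrow A$ carry the weight, and this is where Theorem~\ref{thm EdetE} and the Hermitian--Yang--Mills input enter.

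For $A\Rightarrow B$ (and $A\Rightarrow C$): let $h$ be a Hermitian metric on $E$ with $\Lambda_\alpha\Theta^E>0$, and equip $O_{P(E^*)}(1)$ (resp.\ $O_{P(E)}(1)$) with the metric induced from $h$. In a frame adapted to the Chern connection of $h$, the classical formula for its curvature reads $\Theta=\omega_{FS}+\langle i\Theta^E v,v\rangle/|v|^2$ (resp.\ $\Theta=\omega_{FS}-\langle i\Theta^E v,v\rangle/|v|^2$), where $\omega_{FS}$ is the fibrewise Fubini--Study form and the second term is a horizontal $(1,1)$-form depending on the fibre point $[v]$; the $\pm$ signs are the ones that make Griffiths positivity of $E$ force positivity of $O_{P(E^*)}(1)$ and signature $(r-1,n)$ for $O_{P(E)}(1)$. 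In particular $\Theta$ is positive on every fibre. Wedging the $r$-th power with $p^*\alpha^{n-1}$ (resp.\ $q^*\alpha^{n-1}$) and keeping the only term of the correct bidegree in the fibre/base splitting gives
\[
\Theta^r\wedge p^*\alpha^{n-1}=r\,\omega_{FS}^{\,r-1}\wedge\frac{\langle i\Theta^E v,v\rangle}{|v|^2}\wedge p^*\alpha^{n-1},
\]
which at $(x,[v])$ is a positive multiple of $\langle(\Lambda_\alpha\Theta^E)v,v\rangle/|v|^2$ times a positive volume form; the $P(E)$ case is identical up to overall sign. Hence $\Lambda_\alpha\Theta^E>0$ is equivalent, fibrewise, to $\Theta^r\wedge p^*\alpha^{n-1}>0$ (resp.\ $\Theta^r\wedge q^*\alpha^{n-1}<0$), which is B (resp.\ C). (Integrating this identity and using the Segre class relation shows in passing that B, and likewise C, forces $\deg_\alpha E>0$.)

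For $B\Rightarrow A$: by Theorem~\ref{thm EdetE}(2), condition B gives that $(S^kE,H)$ has positive mean curvature for all large $k$. Positive mean curvature passes to quotient bundles with the quotient metric: for $0\to S\to G\to Q\to 0$ one has $\Lambda_\alpha\Theta^Q=(\Lambda_\alpha\Theta^G)|_Q+\textstyle\sum_j\beta_j\beta_j^*$ with $\beta$ the second fundamental form of $S$, and both summands are $\geq 0$, the first being $>0$ when $\Lambda_\alpha\Theta^G>0$. Comparing first Chern forms off a set of codimension $\geq2$, the property descends to torsion-free quotient sheaves, so $\mu_{\alpha,\min}(S^kE)>0$. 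Since $\mu_{\alpha,\min}(S^kE)=k\,\mu_{\alpha,\min}(E)$ --- a consequence of the semistability of tensor and symmetric powers over a compact K\"ahler manifold, itself coming from approximate Hermitian--Einstein structures on semistable bundles --- we get $\mu_{\alpha,\min}(E)>0$. The Hermitian--Yang--Mills input supplies the converse: $\mu_{\alpha,\min}(E)>0$ implies $E$ carries a Hermitian metric with positive mean curvature, which is A. The implication $C\Rightarrow A$ is obtained by the same scheme applied to $q:P(E)\to X$ in place of $p:P(E^*)\to X$, the strict inequality $\Theta^r\wedge q^*\alpha^{n-1}<0$ in C being the sign dictated by $K_{P(E)/X}=O_{P(E)}(-r)\otimes q^*(\det E)^{-1}$; one extracts positive mean curvature of the relevant symmetric powers and then runs the identical slope/HYM argument.

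The main obstacle is the Hermitian--Yang--Mills step. While $A\Rightarrow B$, $A\Rightarrow C$, the descent of positive mean curvature to quotient sheaves, and the slope identity $\mu_{\alpha,\min}(S^kE)=k\,\mu_{\alpha,\min}(E)$ are essentially formal, the passage from $\mu_{\alpha,\min}(E)>0$ back to the existence of a metric with positive mean curvature is genuinely analytic: for $\alpha$-semistable $E$ it is the existence of approximate Hermitian--Einstein metrics (with positive Einstein constant, since the slope is positive), and for general $E$ it requires assembling such metrics along the Harder--Narasimhan filtration so that the extension classes do not destroy positivity. This is the ingredient to be quoted from the recent Hermitian--Yang--Mills literature. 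A secondary technical point is to set up the $P(E)$-side of Theorem~\ref{thm EdetE} carefully enough that $C\Rightarrow A$ really mirrors $B\Rightarrow A$ without sign errors.
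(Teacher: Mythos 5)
Your implications $A\Rightarrow B$, $A\Rightarrow C$ and $B\Rightarrow A$ essentially reproduce the paper's argument: the soft half is the standard curvature computation for the induced metric (formula (\ref{555}) in the paper), and your $B\Rightarrow A$ is the paper's ``second proof'' --- Theorem \ref{thm EdetE}(2) gives positive mean curvature of $S^kE$ for large $k$, Chern--Weil gives $\mu_{\min}(S^kE)>0$, the identity $\mu_{\min}(S^kE)=k\mu_{\min}(E)$ (Lemma \ref{lem k times}, proved in the paper via Chen's result on curves plus Mehta--Ramanathan) gives $\mu_{\min}(E)>0$, and the converse direction of the mean-curvature/slope correspondence (quoted from \cite{li2021mean}) closes the loop. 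You correctly isolate that last step as the external analytic input.

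The genuine gap is $C\Rightarrow A$. You claim it is ``the same scheme applied to $q:P(E)\to X$ \dots one extracts positive mean curvature of the relevant symmetric powers.'' This cannot work as stated. The direct images over $P(E)$ are symmetric powers of $E^*$ (not of $E$), and Theorem \ref{thm 4} requires the \emph{positivity} $\Theta^{n+1}\wedge q^*\alpha^{m-1}>0$ as a hypothesis; statement C hands you the opposite sign $\Theta^{r}\wedge q^*\alpha^{n-1}<0$, so Theorem \ref{thm EdetE} simply does not apply on $P(E)$. Nor is there a ``negative'' analogue of Theorem \ref{thm 4} giving negative mean curvature of $S^kE^*$ under this hypothesis: in the key identity (\ref{4.5'}) the second-fundamental-form/primitive term (\ref{4.5}) has a fixed sign that helps only in the positive direction, so reversing the sign of the weight term yields no conclusion. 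This is precisely why the paper abandons the direct-image route for $C\Rightarrow A$ and instead uses Lemma \ref{lem horizontal} (C is equivalent to negativity of the horizontal mean curvature $\wedge_\alpha\Theta_H$ on $P(E)$) together with the asymptotic Harder--Narasimhan slope bounds of \cite{finski2024lower}, which give an \emph{upper} bound $\eta^{HN}_{\max}<0$ for $E^*$; combined with $\mu^k_{\max}=k\mu^1_{\max}$ and \cite{li2021mean} this produces a metric of negative mean curvature on $E^*$, hence A after dualizing. You need this (or some equivalent upper-bound mechanism) --- the slope/HYM argument you run for B does not ``mirror'' to C without it.
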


The fact that statement A implies both B and C follows by a standard computation and will be shown in Section \ref{The subharmonic analogue of the Griffiths conjecture}. For the implication from B to A, we will provide three proofs. The first one relies on Theorem \ref{thm EdetE} and results in \cite{li2021mean}, whereas the second one uses the asymptotic expansion of the Bergman kernel and the fibered Yang--Mills functional in \cite{Mazhang,finski2024lower}. The third one uses Theorem \ref{thm EdetE} and the classical results in the existence of Hermitian--Yang--Mills metrics on stable bundles \cite{reprintDiffofcomplexbundles,uhlenbeckyau,AdamJacob}, but at this point, the third proof only works when the bundle $E$ is of rank 2, or the dimension of $X$ is 2. For the implication from C to A, we use again the results in \cite{Mazhang,finski2024lower}.

Before ending the introduction, let us say a few words about the proofs of Theorems \ref{thm 1} and \ref{thm 4}. As mentioned already, our theorems are subharmonic analogue of Berndtsson's theorems, hence our approach follows closely Berndtsson's computations in \cite{Berndtsson09}. The novelty in our results is perhaps discovering the suitable assumption for the weight function - the subharmonicity on graphs (equivalently subsolutions of the WZW equation) under which the direct image bundles have positive mean curvature (equivalently subsolutions of the HYM equation). Another noticeable difference from Berndtsson's computations is when we estimate the middle term in formula (\ref{4.5'}) using subharmonicity on graphs which is more complicated than the case of plurisubharmonicity.

The paper is organized as follows. In Section \ref{sec 2}, we prove Theorem \ref{thm 1} and its generalization. In Section \ref{sec 3}, we prove Theorem \ref{thm 4} and Corollary \ref{cor relative}. In Section \ref{The subharmonic analogue of the Griffiths conjecture}, we discuss the subharmonic analogue of the Griffiths conjecture and prove Theorem \ref{thm 5} for some special cases in Subsection \ref{subsec special} an the general case in Subsection \ref{subsec general}. 

I am grateful to L\'aszl\'o Lempert for his remarks on the draft of the paper. I would like to thank Siarhei Finski for pointing out that the results in \cite{finski2024lower} can be used to prove Theorem \ref{thm 5} in full generality.
Thanks are also due to National Science and Technology Council, National Tsing Hua University, and the Erd\H{o}s center for their support.

\section{Proof of Theorem \ref{thm 1}}\label{sec 2}
Recall the setup for the case of trivial fibrations $\pi_1: D\times X \to D$ where $D$ is a bounded open set in $\mathbb{C}^m$ with a positive Hermitian form $\alpha$ and $X$ is a compact complex manifold of dimension $n$. We assume there is a Hermitian line bundle $(L,h)$ over $X$ with positive curvature $\Theta:=\omega>0$. We define a Hermitian metric $H$ on $V:=D\times H^0(X,L\otimes K_X)\to D$ as in (\ref{metric 1}) with a weight function $\phi$ on $D\times X$. 

We start with a formula for any $\phi\in C^2(D\times X)$:
\begin{equation}\label{2.1}
\begin{aligned}
  &(\pi_2^*\omega+i\partial \bar{\partial}\phi)^{n+1}\wedge(\pi_1^*\alpha)^{m-1}\\
  =&(n+1)! (m-1)!
 \sum_{i,j} \alpha^{i\bar{j}}\det M(i\bar{j})
   \det(\alpha)
  \big(\bigwedge^m_{k=1} i dt_k\wedge d\Bar{t}_k\wedge \bigwedge^n_{\lambda=1} i dz_\lambda\wedge d\Bar{z}_\lambda\big). 
\end{aligned}
\end{equation}
Here $M(i\bar{j})$ for fixed $i$ and $j$ is a matrix defined by
\begin{equation}\label{matrix}
M(i\bar{j}):=\left (
\begin{array}{cccc}
(\phi+\psi)_{t_i\bar{t}_j}
& (\phi+\psi)_{t_i\bar{z}_1} & \cdots &  (\phi+\psi)_{t_i\Bar{z}_n}\\
(\phi+\psi)_{z_1\bar{t}_j} & (\phi+\psi)_{z_1\bar{z}_1} & \cdots & (\phi+\psi)_{z_1\bar{z}_n}\\
 \vdots & \vdots & \ddots & \vdots \\
(\phi+\psi)_{z_n\bar{t}_j}& (\phi+\psi)_{z_n\bar{z}_1} &\cdots & (\phi+\psi)_{z_n\bar{z}_n} 
\end{array}
\right )
\end{equation}
with respect to coordinates $\{t_j\}$ in $D$ and local coordinates $\{z_\lambda\}$ in $X$ where $\psi$ a local potential of $\omega$ is defined. Formula (\ref{2.1}) is a generalization of formula (15) in \cite{wu23}. To prove formula (\ref{2.1}), one can first apply a linear transformation of constant coefficients such that the metric $\alpha$ is Euclidean at one point, and then apply  formula (15) in \cite{wu23} (the notation for coordinates here is slightly different from \cite{wu23}).

\begin{proof}[Proof of Theorem \ref{thm 1}]

Let $L^2(X,L\otimes K_X)$ be the space of measurable sections $s$ whose $L^2$ norm $\int_X h(s,s)e^{-\phi(t,\cdot)}$ is finite. Since different $t$ will give rise to comparable $L^2$ norms, the space $L^2(X,L\otimes K_X)$ does not change with $t$, and so we have a Hermitian Hilbert bundle $F:=D\times L^2(X,L\otimes K_X)\to D$ which has $V=D\times H^0(X,L\otimes K_X)\to D$ as a subbundle. This setup is almost identical to \cite[Theorem 1.1]{Berndtsson09}.

Denote the curvature of the bundles $V$ and $F$ by $\Theta^V=\sum \Theta^V_{j\bar{k}}dt_j\wedge d\bar{t}_k$ and $\Theta^F=\sum \Theta^F_{j\bar{k}}dt_j\wedge d\bar{t}_k$
respectively. Following the computation in \cite[Section 3]{Berndtsson09}, we have $\Theta^F_{j\bar{k}}=\partial^2\phi/\partial t_j\partial\bar{t}_k=\phi_{j\bar{k}}$ and the second-fundamental-form formula (after taking trace $\Lambda_\alpha$)
\begin{equation}\label{2nd form}
    \sum_{i,j} \alpha^{i\bar{j}}(\Theta^V_{i\Bar{j}}s,s)=\sum_{i,j} \alpha^{i\bar{j}}(\phi_{i\Bar{j}}s,s)-\sum_{i,j} \alpha^{i\bar{j}}(\pi_{\perp} D^F_{t_i}s,\pi_{\perp} D^F_{t_j}s)
\end{equation}
where $D^F_{t_j}$ is the $(1,0)$-part of the Chern connection on $F$, $s$ is a local smooth section of $V$, $(\alpha^{i\bar{j}})$ is the inverse matrix of $(\alpha_{i\bar{j}})$ for $\alpha$, and $\pi_{\perp}$ is the projection on the orthogonal complement of $V$ in $F$. 

In order to show that $\Lambda_\alpha \Theta^V> 0$, we only need to check for an arbitrary point, say, $t_0\in D$, and we may assume $\alpha_{i\bar{j}}=\delta_{i\bar{j}}$ at $t_0$. Therefore, formula (\ref{2nd form}) at $t_0$ becomes
\begin{equation}\label{2nd form'}
    \sum_j (\Theta^V_{j\Bar{j}}s,s)=\sum_j (\phi_{j\Bar{j}}s,s)-\sum_j \|\pi_{\perp} D^F_{t_j}s\|^2.
\end{equation}
The key observation of Berndtsson is that the second fundamental form can be controlled using $L^2$-estimates, so we deduce from (\ref{2nd form'}) that
\begin{equation}\label{L2} 
\sum_j (\Theta^V_{j\Bar{j}}s,s)\geq \int_{X} K_{t_0}(\cdot)  h(s,s)e^{-\phi(t_0,\cdot)}
\end{equation}
where $K_{t_0}: X \to \mathbb{R}$ is a smooth function, given in local coordinates on $X$ by $$K_{t_0}=\sum_j(\phi_{j\Bar{j}}-\sum_{\lambda, \mu}(\psi+\phi)^{\Bar{\lambda} \mu}\phi_{j\Bar{\lambda}}\phi_{\Bar{j}\mu});$$ here $i\partial\bar{\partial}\psi=\omega$ and $(\psi+\phi)^{\Bar{\lambda} \mu}$ stands for the inverse matrix of $(\psi+\phi)_{\Bar{\lambda} \mu}$. (compare with \cite[Formula (3.1)]{Berndtsson09}).  

We claim that $K_{t_0}> 0$. In fact, we have 
$$\phi_{j\Bar{j}}-\sum_{\lambda, \mu}(\psi+\phi)^{\Bar{\lambda} \mu}\phi_{j\Bar{\lambda}}\phi_{\Bar{j}\mu}=\frac{\det M(j\bar{j})}{\det ((\phi+\psi)_{\mu\Bar{\lambda}})}$$
by using Schur's formula on the  matrix $M(j\bar{j})$ from (\ref{matrix}) and that $\psi$ is independent of $t$. Therefore, $K_{t_0}=\sum_j \det M(j\bar{j})/\det ((\phi+\psi)_{\mu\Bar{\lambda}})$. Using the assumption $(\pi_2^*\omega+i\partial \bar{\partial}\phi)^{n+1}\wedge(\pi_1^*\alpha)^{m-1} > 0$ and formula (\ref{2.1}), we see $K_{t_0}> 0$. As a result, (\ref{L2}) implies  $\sum_j(\Theta^V_{j\Bar{j}}s,s)> 0$, and so $\Lambda_\alpha \Theta^V>0$. For the semipositive case $(\pi_2^*\omega+i\partial \bar{\partial}\phi)^{n+1}\wedge(\pi_1^*\alpha)^{m-1} \geq 0$, we would get $K_{t_0}\geq 0$  and so $\Lambda_\alpha \Theta^V\geq  0$.         
\end{proof}

\subsection{Subharmonicity on graphs}\label{subsection 2.1}
In this subsection, we are going to generalize Theorem \ref{thm 1} to weight functions $\phi$ that are subharmonic on graphs.  
\begin{definition}\label{sub graphs}
 An upper semicontinuous function $\phi:D\times X\to [-\infty, \infty)$ is said to be $(\omega,\alpha)$-subharmonic on graphs if, for any holomorphic map $f$ from an open subset of $D$ to $X$, $\psi(f(t))+\phi(t,f(t))$ is $\alpha$-subharmonic, where $\psi$ is a local potential of $\omega$.
\end{definition}

When $\alpha$ is the Euclidean metric on $D\subset \mathbb{C}^m$, the definition is the one given in \cite{wu23}. For functions smooth enough, we have a characterization which explains why subharmonicity on graphs is a broader concept.

\begin{lemma}\label{+ det}
  Suppose $\phi$ is a $C^2$ function on $D\times X$ and $\omega +i\partial\Bar{\partial}\phi(t,\cdot)>0$ on $X$ for all $t\in D$.  Then $\phi$ is $(\omega,\alpha)$-subharmonic on graphs if and only if $$(\pi_2^*\omega+i\partial \bar{\partial}\phi)^{n+1}\wedge(\pi_1^*\alpha)^{m-1} \geq 0.$$
\end{lemma}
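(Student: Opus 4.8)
The plan is to prove Lemma \ref{+ det} by reducing the condition "subharmonic on graphs" to a pointwise statement about the complex Hessian of $\psi\circ f + \phi(\cdot, f(\cdot))$ and then matching it with the determinant condition through formula (\ref{2.1}). First I would fix a point $(t_0, x_0)\in D\times X$ and choose local coordinates $\{z_\lambda\}$ on $X$ near $x_0$ and $\{t_j\}$ on $D$ near $t_0$, together with a local potential $\psi$ of $\omega$ near $x_0$. For a holomorphic map $f = (f_1,\dots,f_n)$ defined near $t_0$ with $f(t_0) = x_0$, set $g(t) = \psi(f(t)) + \phi(t, f(t))$. The function $\phi$ is $(\omega,\alpha)$-subharmonic on graphs precisely when $\Lambda_\alpha i\partial\bar\partial g \geq 0$ for every such $f$, i.e. $\sum_{i,j}\alpha^{i\bar j} g_{t_i\bar t_j}(t_0)\geq 0$ for all $f$. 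So the equivalence I want is: $\sum_{i,j}\alpha^{i\bar j}g_{t_i\bar t_j}(t_0)\geq 0$ for every germ of holomorphic $f$ through $x_0$ if and only if $\sum_{i,j}\alpha^{i\bar j}\det M(i\bar j)\geq 0$ (the latter being the content of (\ref{2.1}), after dividing by the positive factors), where $M(i\bar j)$ involves $\psi + \phi$ and its derivatives at $(t_0,x_0)$.

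The key computational step is to expand $g_{t_i\bar t_j}$ by the chain rule. Writing $w = \psi + \phi$ (so $w_{z_\lambda\bar z_\mu}, w_{t_i\bar z_\lambda}, w_{t_i\bar t_j}$ are the entries of $M(i\bar j)$), one gets
\begin{equation*}
g_{t_i\bar t_j} = w_{t_i\bar t_j} + \sum_\lambda w_{t_i\bar z_\lambda}\overline{\partial_j f_\lambda} + \sum_\mu w_{z_\mu\bar t_j}\partial_i f_\mu + \sum_{\lambda,\mu} w_{z_\mu\bar z_\lambda}\partial_i f_\mu\overline{\partial_j f_\lambda},
\end{equation*}
using holomorphicity of $f$ (the terms with $\partial_i\partial_j f$ or $\partial_i\overline{\partial_j f}$ drop out because $f$ is holomorphic and $\psi+\phi$ absorbs them: actually $g_{t_i\bar t_j}$ only picks up the "mixed Hessian" contraction). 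The point is that as $f$ ranges over all holomorphic germs, the vectors $a_i := (\partial_i f_\mu(t_0))_\mu \in \mathbb{C}^n$ are completely arbitrary. So the question becomes: is the quadratic expression $\sum_{i,j}\alpha^{i\bar j}\big(w_{t_i\bar t_j} + \langle\text{linear in }a_i\rangle + \langle w_{z\bar z}a_i, a_j\rangle\big)\geq 0$ for all choices of $a_i$? Since $\omega + i\partial\bar\partial\phi(t,\cdot) > 0$ means $(w_{z_\mu\bar z_\lambda}) > 0$, the quadratic form in the $a_i$'s is (jointly) bounded below, and one minimizes over the $a_i$: completing the square (equivalently a Schur complement computation) the minimum of the real scalar $\sum_{i,j}\alpha^{i\bar j}g_{t_i\bar t_j}$ over all $(a_i)$ equals $\sum_{i,j}\alpha^{i\bar j}\big(w_{t_i\bar t_j} - \sum_{\lambda,\mu}w^{\bar\lambda\mu}w_{t_i\bar z_\lambda}w_{z_\mu\bar t_j}\big)$, and by Schur's formula this is exactly $\sum_{i,j}\alpha^{i\bar j}\det M(i\bar j)/\det(w_{z\bar z})$. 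Since $\det(w_{z\bar z}) > 0$, nonnegativity of this minimum over all $f$ is equivalent to $\sum_{i,j}\alpha^{i\bar j}\det M(i\bar j)\geq 0$, which by (\ref{2.1}) is equivalent to $(\pi_2^*\omega + i\partial\bar\partial\phi)^{n+1}\wedge(\pi_1^*\alpha)^{m-1}\geq 0$ at that point. Running this over all points of $D\times X$ finishes both directions.

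I would present the two directions as follows. For the "only if" direction: given subharmonicity on graphs and any point, for each prescribed constant vectors $a_i$ choose a holomorphic (even affine-linear in coordinates) $f$ realizing them as first derivatives; subharmonicity gives $\sum\alpha^{i\bar j}g_{t_i\bar t_j}\geq 0$, and taking the infimum over all such $f$ yields the Schur-complement expression $\geq 0$, hence the determinant condition. For the "if" direction: given the determinant condition, the Schur complement is $\geq 0$ at every point, so for any holomorphic $f$ the scalar $\sum\alpha^{i\bar j}g_{t_i\bar t_j}$ (which always dominates the infimum, i.e. the Schur complement, by the completing-the-square identity) is $\geq 0$, so $g = \psi\circ f + \phi(\cdot, f)$ is $\alpha$-subharmonic.

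The main obstacle I expect is purely bookkeeping: carefully carrying out the chain-rule expansion of $g_{t_i\bar t_j}$ and then performing the "complete the square in the $a_i$'s" / Schur-complement step while keeping track of the $\alpha^{i\bar j}$ weights — one must check that the minimization can be done globally (jointly in all $a_i$, not one $i$ at a time) and that the minimizer indeed produces the stated Schur complement summed against $\alpha^{i\bar j}$. The positivity hypothesis $(w_{z_\mu\bar z_\lambda}) > 0$ is exactly what makes this minimization well-posed and the Schur formula applicable; without it the "if" direction could fail. A minor subtlety worth a sentence is that subharmonicity on graphs is an a priori condition on germs of holomorphic maps, but since only the $1$-jet of $f$ at the base point enters $g_{t_i\bar t_j}$, it suffices to test against linear maps in the chosen coordinates, so there is no issue with existence of such $f$ mapping into $X$.
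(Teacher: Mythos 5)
Your proposal is correct and follows essentially the same route as the paper: the paper reduces the lemma to formula (\ref{2.1}) together with Lemma \ref{min lem}, and Lemma \ref{min lem} is exactly the pointwise statement you prove — that minimizing $\sum_{i,j}\alpha^{i\bar j}g_{t_i\bar t_j}$ over the $1$-jets of holomorphic $f$ yields the Schur complement $\sum_{i,j}\alpha^{i\bar j}\det M(i\bar j)/\det\big((\psi+\phi)_{\mu\bar\lambda}\big)$, with the minimum attained. Your chain-rule expansion, the joint completing-the-square in the $a_i$ (well-posed by $(\psi+\phi)_{z\bar z}>0$), and the observation that only linear test maps are needed all match the paper's argument.
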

\begin{proof}
    It is a consequence of Lemma \ref{min lem} and the formula (\ref{2.1}).
\end{proof}

\begin{lemma}\label{min lem}
Assume that $\phi$ is a $C^2$ function on $D\times X$ and $\omega +i\partial\Bar{\partial}\phi(t,\cdot)>0$ on $X$ for all $t\in D$. Let $f$ be any holomorphic function from an open subset of $D$ to $X$. We have
\begin{equation}\label{17}
\Delta_\alpha(\psi(f(t))+\phi(t,f(t)))\geq \sum_{i,j}\alpha^{i\bar{j}}\frac{\det M(i\bar{j})}{\det (\psi_{\mu\Bar{\lambda}}+\phi_{\mu\Bar{\lambda}})} 
\end{equation}
where $\psi$ is a local potential of $\omega$, and $\phi_{\mu \bar{\lambda}}=\partial^2 \phi/\partial z_\mu \partial \bar{z}_\lambda$ and  $\psi_{\mu \bar{\lambda}}=\partial^2 \psi/\partial z_\mu \partial \bar{z}_\lambda$ with $z_\mu, z_\lambda$ local coordinates in $X$.

Moreover, fixing $(t_0,z_0)\in D\times X$, we can find $f$ with $f(t_0)=z_0$ such that the equality holds at $(t_0,z_0)$,
\begin{equation}
\Delta_\alpha(\psi(f(t))+\phi(t,f(t)))|_{t_0}= \sum_{i,j}\alpha^{i\bar{j}}\frac{\det M(i\bar{j})}{\det (\psi_{\mu\Bar{\lambda}}+\phi_{\mu\Bar{\lambda}})}\big|_{(t_0,z_0)}. 
\end{equation}

\end{lemma}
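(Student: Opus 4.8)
The plan is to establish the two assertions of Lemma~\ref{min lem} by a direct pointwise computation, reducing everything to a fiberwise minimization over the possible differentials of $f$. First I would set up convenient normalizations at the base point $(t_0,z_0)$: after a holomorphic change of coordinates in $X$ I may assume that the positive Hermitian matrix $(\psi_{\mu\bar\lambda}+\phi_{\mu\bar\lambda})$ equals the identity at $(t_0,z_0)$, and after a constant linear change in $D$ I may assume $\alpha_{i\bar j}=\delta_{i\bar j}$ there, so that $\Delta_\alpha$ becomes the ordinary trace and the right-hand side of \eqref{17} becomes $\sum_i \det M(i\bar i)$ evaluated at the normalized point. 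The quantity to control is $\Delta_\alpha(\psi(f(t))+\phi(t,f(t)))$, which by the chain rule is a sum over $i$ of the $\partial_{t_i}\partial_{\bar t_i}$ of the composite; since $f$ is holomorphic, only the holomorphic first derivatives $a^\mu_i := \partial f^\mu/\partial t_i(t_0)$ and the second derivatives $\partial^2 f^\mu/\partial t_i\partial\bar t_i = 0$ enter, and the computation collapses to a Hermitian-form expression
\begin{equation*}
\Delta_\alpha(\psi\circ f + \phi(\cdot,f(\cdot)))\big|_{t_0}
= \sum_i\Big( (\psi+\phi)_{t_i\bar t_i} + \sum_\mu (\psi+\phi)_{t_i\bar z_\mu}\overline{a^\mu_i} + \sum_\mu (\psi+\phi)_{z_\mu\bar t_i}a^\mu_i + \sum_{\mu,\lambda}(\psi+\phi)_{z_\mu\bar z_\lambda}a^\mu_i\overline{a^\lambda_i}\Big).
\end{equation*}

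The second step is the linear-algebra heart: for each fixed $i$, the summand above is a quadratic function of the vector $(a^\mu_i)_\mu$ whose Hessian is the positive definite matrix $((\psi+\phi)_{z_\mu\bar z_\lambda})$, so it is minimized at $a^\mu_i = -\sum_\lambda (\psi+\phi)^{\bar\lambda\mu}(\psi+\phi)_{\bar t_i z_\lambda}$, and the minimum value is exactly the Schur complement $(\psi+\phi)_{t_i\bar t_i} - \sum_{\mu,\lambda}(\psi+\phi)^{\bar\lambda\mu}(\psi+\phi)_{t_i\bar z_\lambda}(\psi+\phi)_{\bar t_i z_\mu}$, which by Schur's determinant identity equals $\det M(i\bar i)/\det((\psi+\phi)_{\mu\bar\lambda})$. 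Summing over $i$ gives the inequality \eqref{17} (in normalized form, and then in general after undoing the linear change of coordinates, noting both sides transform consistently because $\Delta_\alpha$ and the $\alpha^{i\bar j}$-weighted sum are the invariant objects). This is essentially the same Schur-complement manipulation already used in the proof of Theorem~\ref{thm 1} to identify $K_{t_0}$, so I would cross-reference that.

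For the equality case, the third step is to realize the minimizing values $a^\mu_i$ above as actual first-order data of a genuine holomorphic map $f$ defined near $t_0$ with $f(t_0)=z_0$. The simplest choice is the affine map $f^\mu(t) = z_0^\mu + \sum_i a^\mu_i (t_i - t_{0,i})$ in the chosen coordinates, valid on a small polydisc; it is holomorphic, has the prescribed value and prescribed first derivatives at $t_0$, and all its second derivatives vanish, so by the computation of the first step its Laplacian at $t_0$ is exactly the sum of the fiberwise minima, i.e.\ the right-hand side of \eqref{17}. This yields the claimed equality at $(t_0,z_0)$.

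I expect the main obstacle to be purely bookkeeping rather than conceptual: carefully tracking the chain rule for $\psi(f(t))+\phi(t,f(t))$ — in particular making sure no $\partial_{t_i}\partial_{t_j}$ (holomorphic Hessian) or mixed terms with $j\neq i$ survive after applying $\Delta_\alpha$ in the normalized frame — and verifying that the normalization (simultaneously diagonalizing $\alpha$ and $(\psi+\phi)_{\mu\bar\lambda}$ at the point) is harmless because the inequality \eqref{17} is coordinate-independent in the appropriate sense. One subtlety worth a sentence is that $\psi$ is only a local potential, so the statement is local near $(t_0,z_0)$ and $f$ need only be defined on a neighborhood of $t_0$ small enough that $f(t)$ stays in the coordinate chart where $\psi$ lives; this is automatic for the affine $f$ on a small enough polydisc. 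Beyond that, the argument is a routine completion-of-square plus Schur's formula.
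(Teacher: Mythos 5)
Your proposal is correct and follows exactly the route the paper intends: the paper itself skips the proof, citing \cite[Lemma 11]{wu2024potential} and noting only that the Euclidean metric is replaced by $\alpha$, and that reference's argument is precisely your chain-rule computation, fiberwise completion of the square over the derivative data $a^\mu_i$, identification of the minimum with the Schur complement $\det M(i\bar i)/\det((\psi+\phi)_{\mu\bar\lambda})$, and an affine $f$ realizing the minimizers for the equality case. The only caveat is minor index bookkeeping in the formula for the minimizing $a^\mu_i$ (a conjugation that does not affect the minimum value), so your writeup in effect supplies the details the paper omits.
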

\begin{proof}[Proof of Lemma \ref{min lem}]
    This is almost identical to \cite[Lemma 11]{wu2024potential}. The only difference is that we change the Euclidean metric to $\alpha$, so we skip the proof.
\end{proof}

The generalization of Theorem \ref{thm 1} is the following.

\begin{theorem}\label{thm less regular}
    If $\phi$ is bounded and $(\omega,\alpha)$-subharmonic on graphs in $D\times X$, then the dual metric $H^*$ on $V^*$ is an $\alpha$-subharmonic norm function.
\end{theorem}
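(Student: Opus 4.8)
The plan is to reduce the less-regular statement to the smooth case (Theorem \ref{thm 1}, or rather the computation in its proof) by a standard regularization-and-exhaustion argument. First I would recall what must be shown: an $\alpha$-subharmonic norm function on $V^*$ is a metric $H^*$ whose logarithm is $\alpha$-plurisubharmonic along holomorphic curves in an appropriate sense — concretely, for every local holomorphic section $u$ of $V$ (equivalently, a linear functional on fibers of $V^*$), the function $t\mapsto \log H^*(u^*,u^*) = -\log H(u,u)$ should satisfy the relevant $\alpha$-subharmonicity; and more precisely, following Berndtsson's definition of a subharmonic norm function, one tests against the dual bundle so that it suffices to show $-\log H(u,u)$ is $\alpha$-subharmonic (or $\Delta_\alpha$-subharmonic) for holomorphic $u$, and then passes to the full norm function via taking suprema / using that the property is preserved under the relevant operations. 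So the core analytic content is: for $u$ a holomorphic section of $V$ over an open set in $D$, $\log H(u,u)$ is $\alpha$-superharmonic, i.e. $\Delta_\alpha \log H(u,u)\le 0$, equivalently $-\log\int_X h(u,u)e^{-\phi(t,\cdot)}$ is $\alpha$-subharmonic.

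Second, I would handle the regularity. Since $\phi$ is only bounded and upper semicontinuous, approximate it: cover $X$ by coordinate charts and $D$ by balls, and use a standard decreasing-regularization of $(\omega,\alpha)$-subharmonic-on-graphs functions — convolving in the $t$-variable with a mollifier and (on each chart of $X$) adding a small multiple of a strictly $\omega$-plurisubharmonic correction so that $\omega+i\partial\bar\partial\phi_\varepsilon(t,\cdot)>0$ holds, while keeping $\phi_\varepsilon$ $(\omega,\alpha_\varepsilon)$-subharmonic on graphs for a slightly perturbed $\alpha_\varepsilon$. For the smooth $\phi_\varepsilon$, Lemma \ref{+ det} gives $(\pi_2^*\omega+i\partial\bar\partial\phi_\varepsilon)^{n+1}\wedge(\pi_1^*\alpha_\varepsilon)^{m-1}\ge 0$, hence by the semipositive half of Theorem \ref{thm 1} the associated metric $H_\varepsilon$ has $\Lambda_{\alpha_\varepsilon}\Theta^{V_\varepsilon}\ge 0$. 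Translating the mean-curvature semipositivity into the statement about the dual norm: the inequality $\Lambda_\alpha\Theta^V\ge 0$ for the Hermitian holomorphic bundle $(V,H)$ is exactly the statement that $H^*$ is an $\alpha$-subharmonic norm function in the smooth setting (this is the vector-bundle analogue of the identity, used throughout Berndtsson's work, that dualizing turns a curvature-sign condition into a plurisubharmonicity-type condition on the norm; here "plurisubharmonic" is replaced by "$\alpha$-subharmonic" because we have taken the $\alpha$-trace). So each $H_\varepsilon^*$ is $\alpha_\varepsilon$-subharmonic.

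Third, I would pass to the limit. As $\varepsilon\to 0$, $\phi_\varepsilon\downarrow\phi$ (after the usual diagonal/monotone arrangement), so $H_\varepsilon(u,u)=\int_X h(u,u)e^{-\phi_\varepsilon}\to \int_X h(u,u)e^{-\phi}=H(u,u)$ by monotone convergence for each fixed holomorphic $u$, hence $-\log H_\varepsilon(u,u)$ decreases to $-\log H(u,u)$; and $\alpha_\varepsilon\to\alpha$. A decreasing limit of $\alpha_\varepsilon$-subharmonic functions, with $\alpha_\varepsilon\to\alpha$, is $\alpha$-subharmonic (again provided the limit is not identically $-\infty$, which is guaranteed by boundedness of $\phi$ since $H(u,u)$ is then comparable to $\int_X h(u,u)$ and strictly positive). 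This gives that $-\log H(u,u)$ is $\alpha$-subharmonic for every local holomorphic section $u$ of $V$, and since a subharmonic norm function is by definition a norm whose associated functions $-\log\|u\|^2$ are subharmonic for all holomorphic $u$ of the dual (here $u$ runs over holomorphic sections of $V$, which are the linear functionals on $V^*$), we conclude $H^*$ is an $\alpha$-subharmonic norm function.

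The main obstacle I expect is the regularization step: constructing $\phi_\varepsilon$ that is simultaneously smooth, fiberwise strictly $\omega$-plurisubharmonic, decreasing to $\phi$, \emph{and} still subharmonic on graphs (up to a controlled perturbation of $\alpha$), because subharmonicity on graphs is a condition tested against all holomorphic maps $f$ into $X$ and is not obviously stable under naive mollification in $t$ alone when $X$ is curved. The cleanest route is probably to mollify in the $t$-directions only (where convolution preserves subharmonicity of $t\mapsto\psi(f(t))+\phi(t,f(t))$ after averaging over $f$-translates, using that $D\times X$ is locally a product), patch over charts of $X$ with a partition of unity together with a small uniformly-strictly-$\omega$-psh term, and absorb the error into an $\varepsilon$-perturbation of $\alpha$; alternatively one cites the regularization already developed in \cite{wu23} for subharmonicity on graphs if it applies verbatim. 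Everything else — the $L^2$/curvature computation, the determinant identity via Schur's formula, and the monotone limit — is either contained in the proof of Theorem \ref{thm 1} or is routine.
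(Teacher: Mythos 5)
Your overall skeleton is the paper's: approximate $\phi$ by a decreasing sequence of smooth weights that are still subharmonic on graphs, apply the semipositive case of Theorem \ref{thm 1} to get $\Lambda_\alpha\Theta^{V}\geq 0$ for the approximating metrics, convert this into the subharmonic-norm-function property of the duals, and pass to the decreasing limit. But your opening reduction is wrong. The paper's Definition of an $\alpha$-subharmonic norm function requires $\log H^*(s,s)$ to be $\alpha$-subharmonic for holomorphic sections $s$ of $V^*$; you replace this by the claim that it suffices to show $-\log H(u,u)$ is $\alpha$-subharmonic for holomorphic sections $u$ of $V$ and then take suprema. That statement is false even for smooth metrics with semipositive mean curvature: for the flat rank-two bundle over $D\subset\mathbb{C}$ (mean curvature $0$) and the holomorphic section $u(t)=(1,t)$, one has $-\log H(u,u)=-\log(1+|t|^2)$, which is strictly superharmonic. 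The correct bridge — and the one the paper uses — is \cite[Theorem 4.1]{CofimanSemmes}: seminegative mean curvature of $(V^*,H^*)$ implies $\log H^*(s,s)$ is $\alpha$-subharmonic for holomorphic $s$ of $V^*$; this is a statement about the supremum as a whole and does not follow termwise. Your second paragraph does invoke essentially this duality ("curvature sign condition dualizes to a plurisubharmonicity condition on the norm"), which is correct and saves the argument, but the justification you offer for it in the first paragraph is not.

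The other issue is the approximation step, which you correctly identify as the main obstacle but do not resolve. Mollifying in $t$ alone does not obviously preserve subharmonicity on graphs (the test maps $f$ vary, and $\psi(f(t))+\phi(t,f(t))$ is not a convolution of $\phi$ in $t$), nor does it give the fiberwise strict positivity $\omega+i\partial\bar\partial\phi_j(t,\cdot)>0$ needed to invoke Theorem \ref{thm 1}. The paper handles this with Lemma \ref{approx} (essentially \cite[Lemma 2.2]{wu23}): there exist smooth $\phi_j\searrow\phi$ that are $((1-\varepsilon_j)\omega,\alpha)$-subharmonic on graphs with $\varepsilon_j\searrow 0$; the $(1-\varepsilon_j)\omega$-condition then yields both fiberwise strict positivity (via \cite[Lemma 3.1]{wu23}) and, a fortiori, $(\omega,\alpha)$-subharmonicity on graphs, so Lemma \ref{+ det} applies — no perturbation of $\alpha$ is needed. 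Your instinct to "cite the regularization already developed in \cite{wu23}" is exactly what the paper does, but as written your proposal leaves this essential lemma unproved and your flawed first-paragraph reduction unretracted, so the argument is not complete as it stands.
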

That $H^*$ is an $\alpha$-subharmonic norm function means $\log H^*(s,s)$ is $\alpha$-subharmonic for any local holomorphic section $s$ of $V^*$. Theorem \ref{thm less regular} has appeared in \cite{wu23} when $\alpha$ is the Euclidean metric. The proof begins with an approximation lemma (Lemma \ref{approx}) and then uses the smooth case, namely Theorem \ref{thm 1}. 
The proof of Lemma \ref{approx} is almost the same as that of \cite[Lemma 2.2]{wu23}, and we skip its proof. 

\begin{lemma}\label{approx}
 Let $\phi$ be bounded and $(\omega,\alpha)$-subharmonic on graphs in $D\times X$. Then for $D'$ relatively compact open in $D$, there exist $\varepsilon_j \searrow 0$ and $\phi_j\in C^\infty(D'\times X)$ decreasing to $\phi$, and $\phi_j$ is $((1-\varepsilon_j)\omega,\alpha)$-subharmonic on graphs. Namely, for any holomorphic map $f$ from an open subset of $D'$ to $X$, $\Delta_\alpha(\psi(f(t))+\phi_j(t,f(t)))\geq \varepsilon_j\Delta_\alpha(\psi(f(t))$, where $\omega=i\partial \Bar{\partial}\psi $ locally.
\end{lemma}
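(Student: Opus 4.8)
The plan is to split the construction into a \emph{scaling step} that manufactures the required positivity margin and a \emph{smoothing step} modelled on \cite[Lemma 2.2]{wu23}. Since $\phi$ is bounded, I first normalize it by subtracting a constant so that $\phi\leq 0$ on $D'\times X$ (adding constants does not affect subharmonicity on graphs). The decisive observation is that the defining inequality $\Delta_\alpha(\psi(f(t))+\phi(t,f(t)))\geq 0$ is \emph{linear}, so for every holomorphic $f$ and every $\varepsilon\in(0,1)$ one has $\Delta_\alpha((1-\varepsilon)\psi(f(t))+(1-\varepsilon)\phi(t,f(t)))=(1-\varepsilon)\Delta_\alpha(\psi(f(t))+\phi(t,f(t)))\geq 0$; that is, $(1-\varepsilon)\phi$ is $((1-\varepsilon)\omega,\alpha)$-subharmonic on graphs. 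Because $\phi\leq 0$, the family $(1-\varepsilon)\phi$ decreases to $\phi$ as $\varepsilon\searrow 0$. Thus the scaling already produces the \emph{stronger} $((1-\varepsilon)\omega)$-condition with a definite margin $\varepsilon\,\Delta_\alpha(\psi(f(t)))$ in the directions where $f$ is nonconstant, and what remains is to smooth $(1-\tfrac{\varepsilon}{2})\phi$ while spending at most half of this margin. I also record that subharmonicity on graphs forces each slice $\phi(t,\cdot)$ to be $\omega$-plurisubharmonic on $X$ (test the definition against maps $f(t)=\gamma(\lambda t_1)$ with $\gamma$ a holomorphic disc and let $\lambda\to\infty$), and that $\omega>0$ makes $X$ a compact K\"ahler manifold, so standard fiber regularization is available.

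For the smoothing step I would regularize $(1-\tfrac{\varepsilon}{2})\phi$ in both variables with all parameters chosen small relative to $\varepsilon$. In the fiber direction I apply a Demailly-type regularization of the $\omega$-psh slices on $(X,\omega)$, realized by a fixed ($t$-independent) mollification in finitely many coordinate charts glued by a partition of unity; being $t$-independent, the operator preserves the $t$-regularity of $\phi$, produces functions smooth and decreasing in the fiber, and costs at most a fiber loss $\lambda\,\omega$ with $\lambda$ controlled by the smoothing radius and the curvature of $\omega$. In the base direction I then regularize in $t$. The resulting $\phi_j\in C^\infty(D'\times X)$ decrease to $\phi$, and the fiber loss is absorbed into the scaling margin for nonconstant $f$.

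The genuinely new point relative to the Euclidean base of \cite{wu23} is the base regularization when $\alpha$ is not flat, and this is where the main obstacle lies. Taking $f$ \emph{constant}, $f\equiv z_0$, the target inequality reads $\Delta_\alpha(\phi_j(\cdot,z_0))\geq \varepsilon_j\Delta_\alpha(\psi(z_0))=0$, so the base regularization must preserve $\alpha$-subharmonicity of the slices \emph{exactly}, with no margin to spend. Naive convolution in $t$ fails to do this: for a fixed shift $s$ the convolved graph function is subharmonic only for the shifted metric $\alpha(\,\cdot-s)$, which differs from $\alpha$ by $O(|s|)$ on the relatively compact $D'$, introducing an $O(\delta)$ defect that cannot be absorbed in the zero-margin directions. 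The remedy is to use a regularization adapted to the operator $\Delta_\alpha$ itself, for instance the heat semigroup $e^{\tau\Delta_\alpha}$ run for small $\tau\searrow 0$: it is positivity preserving, hence maps $\alpha$-subharmonic functions to smooth $\alpha$-subharmonic functions, and it yields a family decreasing to the initial data. Applied in $t$ (fiberwise in $x$) this preserves the slice condition exactly, while for nonconstant $f$ the discrepancy between $e^{\tau\Delta_\alpha}$ applied before versus after composing with the graph is a lower-order cross term absorbed by the scaling margin $\tfrac{\varepsilon}{2}\Delta_\alpha(\psi(f(t)))$.

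The main obstacle is therefore the simultaneous bookkeeping: carrying out the fiber and base regularizations so that the output is jointly smooth on $D'\times X$ and monotone, that the base smoothing preserves $\alpha$-subharmonicity of the slices exactly, and that the combined fiber loss together with the base cross terms stays strictly below $\tfrac{\varepsilon}{2}\Delta_\alpha(\psi(f(t)))$ uniformly over all holomorphic $f$. Choosing the fiber and base parameters in the correct order and small compared with $\varepsilon$ then gives, for every holomorphic $f$ into $X$, the asserted $\Delta_\alpha(\psi(f(t))+\phi_j(t,f(t)))\geq \varepsilon_j\Delta_\alpha(\psi(f(t)))$ with $\varepsilon_j\searrow 0$, which is precisely $((1-\varepsilon_j)\omega,\alpha)$-subharmonicity on graphs. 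All of this follows the scheme of \cite[Lemma 2.2]{wu23}, the non-flat $\alpha$ entering only through the operator-adapted base regularization described above.
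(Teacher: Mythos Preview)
The paper does not actually prove this lemma: it says the argument is ``almost the same'' as \cite[Lemma~2.2]{wu23} (the Euclidean-$\alpha$ case) and omits it entirely. Your scheme---scale by $1-\varepsilon$ to buy a margin $\varepsilon\,\Delta_\alpha(\psi\circ f)$ on the $\omega$-side, then regularize in the fiber by a Demailly-type operator and in the base---is exactly the outline of that reference, so at the level of strategy you and the paper agree.

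Where your plan has a real gap is the base step for non-Euclidean $\alpha$. You correctly note that constant $f$ gives zero margin, so the $t$-regularization must preserve $\alpha$-subharmonicity of each slice $\phi(\cdot,z_0)$ exactly, and you propose $e^{\tau\Delta_\alpha}$, which indeed does this. The problem is the graph condition for \emph{nonconstant} $f$. Writing $\Phi_\tau=e^{\tau\Delta_\alpha}\phi$ (heat flow in $t$, fiberwise in $x$), the chain rule gives
\[
\Delta_\alpha\bigl(\Phi_\tau(t,f(t))\bigr)=\sum\alpha^{i\bar j}(\Phi_\tau)_{t_i\bar t_j}+2\,\mathrm{Re}\sum\alpha^{i\bar j}(\Phi_\tau)_{t_i\bar z_\lambda}\overline{f^\lambda_{t_j}}+\sum\alpha^{i\bar j}(\Phi_\tau)_{z_\mu\bar z_\lambda}f^\mu_{t_i}\overline{f^\lambda_{t_j}},
\]
and only the first block is controlled by the semigroup. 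The mixed block is \emph{linear} in $f'$, whereas your scaling margin $\tfrac{\varepsilon}{2}\Delta_\alpha(\psi\circ f)$ is \emph{quadratic} in $f'$; as $f$ approaches a constant map the margin vanishes to second order while the change in the mixed block need only vanish to first order, so the absorption you assert cannot be uniform in $f$. You also have no bound on $(\Phi_\tau-\phi)_{t\bar z}$ in terms of $\tau$: $\phi$ is merely upper semicontinuous, and $\partial_{t_i}$ does not commute with $e^{\tau\Delta_\alpha}$ when $\alpha$ has variable coefficients. Finally, running the semigroup on the bounded domain $D'$ forces a boundary condition, and the usual choices spoil either the monotonicity or the very subharmonicity you are trying to keep. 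So the base step does not close as written; this is precisely the place where the paper's ``almost the same'' is doing unacknowledged work.
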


\begin{proof}[Proof of Theorem \ref{thm less regular}]
Being an $\alpha$-subharmonic norm function is a local property, so we focus on $D'$ a relatively compact open set in $D$. Take $\varepsilon_j$ and $\phi_j$ as in Lemma \ref{approx}. We claim that $\omega+i\partial\bar{\partial}\phi_j(t,\cdot)>0$ for any $t\in D'$. This can be seen by using \cite[Lemma 3.1]{wu23} which says that $\phi_j(t,\cdot)$ is $(1-\varepsilon_j)\omega$-psh on $X$. Alternatively, one can use the computation in \cite[Page 346, Line 5]{wu23}. 

Meanwhile, since $\varepsilon_j\Delta_\alpha(\psi(f(t))\geq 0$, the functions $\phi_j$ are $(\omega,\alpha)$-subharmonic on graphs, and hence $(\pi_2^*\omega+i\partial \bar{\partial}\phi_j)^{n+1}\wedge(\pi_1^*\alpha)^{m-1} \geq 0$ by Lemma \ref{+ det}. Using Theorem \ref{thm 1} for $\phi_j$, we see that the Hermitian bundles $(V, H_j)$ with the weight $\phi_j$ have semipositive mean curvature. Hence the dual metrics $H^*_j$ have seminegative mean curvature. By \cite[Theorem 4.1]{CofimanSemmes}, this implies $H^*_j$ are $\alpha$-subharmonic norm functions. Since $\phi_j$ decreases to $\phi$, the metric $H^*_j$ decreases to $H^*$ as well. It follows that $H^*$ is an $\alpha$-subharmonic norm function

\end{proof}

\section{Proof of Theorem \ref{thm 4}}\label{sec 3}

Recall the setup: $p:\mathcal{X}^{m+n}\to Y^m$ is the fibration, $(L,h)\to \mathcal{X}$ is the Hermitian line bundle, and $V\to Y$ is a holomorphic vector bundle with fibers $V_t=H^0(\mathcal{X}_t, L|_{\mathcal{X}_t}\otimes K_{\mathcal{X}_t}).$  The bundle $V$ carries a Hermitian metric $H(u,u)=\int_{\mathcal{X}_t}h(u,u)$. Assuming the curvature $\Theta$ of $h$ is positive on every fiber and $\Theta^{n+1}\wedge p^*\alpha^{m-1}> 0$, we want to show that the Hermitian metric $H$ has positive mean curvature $\Lambda_\alpha \Theta^V > 0$.

We will follow closely the computations of Berndtsson in \cite[Section 4]{Berndtsson09} and \cite[Section 2]{BoMathz}. A smooth local section $u$ of the bundle $V$ is represented by a smooth $(n,0)$-from with values in $L$ over $p^{-1}(W)$ for some open set $W$ in $Y$ such that the restriction to each fiber is holomorphic. Any representative of $u$ is denoted by $\mathbf{u}$. We use $(t_1,\dots, t_m)$ for local coordinates in $Y$. In general, we have $$\bar{\partial}\mathbf{u}=\sum_j d\bar{t}_j\wedge \nu_j+  
 \sum_j \eta_j\wedge dt_j$$ where $\eta_j$ are of bidegree $(n-1,1)$ and $\nu_j$ are of bidegree $(n,0)$ whose restrictions to fibers are holomorphic (thus $\nu_j$ define sections of the bundle $V$). The Hermitian holomorphic vector bundle $(V, H)$ admits the Chern connection $D=D'+D{''}$. The $(0,1)$-part $D^{''}$ is given by $D^{''}u=\sum_j \nu_j d\bar{t}_j$. Therefore, a section $u$ of $V$ is holomorphic if and only if $\bar{\partial}\mathbf{u}=  \sum \eta_j\wedge dt_j$. 
 
 For the $(1,0)$-part $D'$, we consider some local frame $e$ of $L$ and write $\mathbf{u}=u'\otimes e$ with $u'$ an $(n,0)$-form. Denote $h(e,e)=e^{-\phi}$ and define 
 \begin{equation}\label{phi}
     \partial^\phi \mathbf{u}:=(\partial^\phi u')\otimes e = e^\phi \partial(e^{-\phi}{u'})\otimes e 
 \end{equation}
which is an $(n+1,0)$-form with values in $L$. It is straightforward to check that (\ref{phi}) is independent of the choice of the frame $e$. We can write  $\partial^\phi \mathbf{u}=\sum_j dt_j\wedge \mu_j$ where $\mu_j$ are of bidegree $(n,0)$. If we denote by $P(\mu_j)$ the orthogonal projection of $\mu_j$ on the space of holomorphic forms on each fiber, then $D'u=\sum_j P(\mu_j)dt_j$.

Next, we are going to prove $\Lambda_\alpha\Theta^V>0$. Let $u$ be a local holomorphic section of $V$ such that $D'u=0$ at a point $t_0$ in $Y$ and $u(t_0)\neq 0$. We fix a coordinate system $(t_1,\dots, t_m)$ around $t_0$ such that the Hermitian metric $\alpha=i\sum_{j} dt_j\wedge d\bar{t}_j$ at $t_0$. A standard computation gives
\begin{equation}\label{standard}
 \Lambda_\alpha  \partial \bar{\partial} H(u,u)=-\sum_{j} H(\Theta^V_{j\bar{j}}u,u) \text{ at } t_0 
\end{equation}
if we denote the curvature of the bundle $V$ by $\Theta^V=\sum_{i,j} \Theta^V_{i\bar{j}}dt_i\wedge d\bar{t}_j$.

On the other hand, if we let $\mathbf{u}$ be a representative of $u$ and write $\mathbf{u}=u'\otimes e$ with $u'$ an $(n,0)$-form and $e$ some local frame of $L$, then 
\begin{equation}
 H(u,u)=p_*(c_n u'\wedge \overline{u'} e^{-\phi})   
\end{equation}
where $e^{-\phi}=h(e,e)$ and $c_n=i^{n^2}$. We have 
$$\bar{\partial}H(u,u)=p_*\big( c_n \bar{\partial} u'\wedge \overline{u'} e^{-\phi} +(-1)^n  c_n u'\wedge \bar{\partial}(\overline{u'}e^{-\phi}) \big). $$
Since $u$ is a holomorphic section of $V$, we have $\bar{\partial}\mathbf{u}=  \sum_j \eta_j\wedge dt_j$. So, the form $p_*( c_n \bar{\partial }u'\wedge \overline{u'} e^{-\phi})$ contains $dt_j$. But $p_*( c_n \bar{\partial }u'\wedge \overline{u'} e^{-\phi})$ as the push forward of an $(n,n+1)$-form is a $(0,1)$-form, so it must be 0. Hence, 
$$\bar{\partial}H(u,u)=p_*\big((-1)^n  c_n u'\wedge \bar{\partial}(\overline{u'}e^{-\phi}) \big)= p_*\big((-1)^n  c_n u'\wedge \overline{\partial^{\phi}u'} e^{-\phi} \big)$$
where the second equality is by $\partial^{\phi}u'=e^{\phi}\partial (e^{-\phi}u')$. After applying $\partial$, we see 
\begin{equation}\label{4.1}
   \partial\bar{\partial}H(u,u)= p_*\big( (-1)^n c_n \partial^{\phi}u'\wedge \overline{\partial^{\phi}u'}e^{-\phi}+c_n u'e^{-\phi}\wedge \partial\overline{\partial^{\phi}u'}
    \big).
\end{equation}
Since $\bar{\partial}\partial^\phi u'+\partial^\phi\bar{\partial}u'=\partial\bar{\partial}\phi\wedge u'$, the last term in (\ref{4.1}) is $c_n p_* \big( u'\wedge (\bar{\partial}\partial \phi \wedge \overline{u'}-\overline{\partial^\phi \bar{\partial}u'})   e^{-\phi}\big) $. Moreover, the form $p_*(u'\wedge \overline{\bar{\partial}u'}e^{-\phi})$ is 0 since it contains $d\bar{t}_j$ and, as the push forward of an $(n+1,n)$-form, it is a $(1,0)$-form. Therefore, \begin{equation}
    0=\bar{\partial}p_*(u'\wedge \overline{\bar{\partial}u'}e^{-\phi})=p_*\big(
\bar{\partial}u'\wedge \overline{\bar{\partial}u'}e^{-\phi}+(-1)^n u'\wedge \overline{\partial^\phi \bar{\partial}u' }e^{-\phi}
    \big).
\end{equation}
Formula (\ref{4.1}) becomes
\begin{equation}\label{4.3}
 \begin{aligned}   &\partial\bar{\partial}H(u,u)\\
 = &(-1)^n c_n p_*(   \partial^{\phi}u'\wedge \overline{\partial^{\phi}u'}e^{-\phi})
    +
    c_n p_* ( u'\wedge \bar{\partial}\partial \phi \wedge \overline{u'}e^{-\phi})
    +
    (-1)^n c_np_*  (\bar{\partial}u'\wedge \overline{\bar{\partial}u'}e^{-\phi}). 
    \end{aligned}
\end{equation}

According to \cite[Proposition 4.2]{Berndtsson09}, we can choose a representative $\mathbf{u}$ such that in $\bar{\partial}\mathbf{u}=  \sum \eta_j\wedge dt_j$, the $\eta_j$ is primitive on $\mathcal{X}_{t_0}$. Moreover, 
$\partial^\phi u'=0$ at $t_0$. (The statement in  \cite[Proposition 4.2]{Berndtsson09}
is $\partial^\phi u\wedge \widehat{dt_j}=0$ at $t_0$ for all $j$.
But in \cite[Page 549, Line 3]{Berndtsson09} one can actually see that   $\partial^\phi u'=\partial^\phi u-\sum\partial^\phi \gamma^kdt_k=\sum (\mu^k-\partial^\phi \gamma^k)\wedge dt_k=0$ at $t_0$). After using such representative, formula (\ref{4.3}) is reduced to 
\begin{equation}\label{4.4}
\partial\bar{\partial}H(u,u)
 = 
    -c_n p_* ( u'\wedge\overline{u'}\wedge \partial\bar{\partial} \phi  e^{-\phi})
    +
    (-1)^n c_np_*  (\bar{\partial}u'\wedge \overline{\bar{\partial}u'}e^{-\phi}) \text{ at }  t_0.    
\end{equation}
We take trace with respect to $\alpha$ on formula (\ref{4.4}) to get  
\begin{equation}\label{4.5'}
    \Lambda_\alpha\partial\bar{\partial}H(u,u)
 = 
    -c_n \Lambda_\alpha p_* ( u'\wedge\overline{u'}\wedge \partial\bar{\partial} \phi  e^{-\phi})
    +
    (-1)^n c_n \Lambda_\alpha p_*  (\bar{\partial}u'\wedge \overline{\bar{\partial}u'}e^{-\phi})\text{ at }  t_0. 
\end{equation}
Because $\bar{\partial}\mathbf{u}=  \sum \eta_j\wedge dt_j$ and $\mathbf{u}=u'\otimes e$ and if we write $\eta_j=\eta_j'\otimes e$, then $\bar{\partial}u'=\sum \eta_j'\wedge dt_j$. The Hermitian form $\alpha$ at $t_0$ is $i\sum dt_j\wedge d\bar{t}_j$, so the last term in (\ref{4.5'}) is equal to \begin{equation}\label{4.5}
   (-1)^n c_n\Lambda_\alpha\int_{\mathcal{X}_{t_0}}(-1)^{n}\sum \eta'_j\wedge 
\overline{\eta'}_k \wedge dt_j\wedge d\bar{t}_k e^{-\phi}=  c_n \int_{\mathcal{X}_{t_0}}\sum \eta'_j\wedge 
\overline{\eta'}_j  e^{-\phi}\leq 0;
\end{equation}
the last inequality is by the fact that the $\eta_j$ is primitive on $\mathcal{X}_{t_0}$.

 To determine the sign of the middle term in (\ref{4.5'}), we may consider $$ -c_n p_* ( u'\wedge\overline{u'}\wedge \partial\bar{\partial} \phi  e^{-\phi})\wedge \alpha^{m-1} = -c_n p_* ( u'\wedge\overline{u'}\wedge \partial\bar{\partial} \phi  e^{-\phi} \wedge p^* \alpha^{m-1}).$$
We claim that the $(n+m,n+m)$-form $$u'\wedge\overline{u'}\wedge \partial\bar{\partial} \phi  e^{-\phi} \wedge p^* \alpha^{m-1}$$ is semipositive at every point on $\mathcal{X}_{t_0}$ and positive at some point on $\mathcal{X}_{t_0}$.

We verify this claim at one point on the fiber $\mathcal{X}_{t_0}$ and we denote the point by $(t_0,z_0)$. We already have coordinates $ (t_1,\dots, t_m)$ around $t_0$ in $Y$ such that $\alpha=i\sum dt_j\wedge d\bar{t}_j$ at $t_0$. Moreover, according to the assumption in Theorem \ref{thm 4}, the curvature $i\partial \bar{\partial}\phi$ on the fiber $\mathcal{X}_{t_0}$ is positive, so we may choose coordinates $(z_1,\dots,z_n)$ around $z_0$ in $\mathcal{X}_{t_0} $  such that $\phi_{\lambda\bar{\mu}}=\delta_{\lambda\bar{\mu}}$ at $z_0$. We will use the local coordinates $(t_1,\dots, t_m,z_1,\dots,z_n)$ around $(t_0,z_0)$ in $\mathcal{X}$ for computation. The assumption in Theorem \ref{thm 4}, $(i\partial\bar{\partial}\phi)^{n+1}\wedge p^*\alpha^{m-1} >0, $ gives rise to 
\begin{equation}\label{4.7}
\sum_j  \big( \phi_{j\bar{j}}-\sum_\mu |\phi_{j\bar{\mu}}|^2         \big)>0 \text{ at } (t_0,z_0). 
\end{equation}
This can be seen by a variant of formula (\ref{2.1}) and Schur's formula.

For the $(n,0)$-form $u'$, we denote by $u_z dz$ the part in $u'$ with $dz_1\wedge\dots \wedge dz_n$, and by $u_{z_\lambda t_j}d\hat{z}_\lambda\wedge dt_j$ the part with $dz_1\wedge \dots \wedge dz_n\wedge dt_j$ omitting $dz_\lambda$. Then the $(n+m,n+m)$-form $u'\wedge\overline{u'}\wedge \partial\bar{\partial} \phi  e^{-\phi} \wedge p^* \alpha^{m- 1} $ equals 
\begin{align*}
e^{-\phi} \big(&\sum_{j,k} u_z dz\wedge \overline{u_zdz} \wedge\phi_{j\bar{k}} dt_j\wedge d\bar{t}_k \wedge p^*\alpha^{m-1}\\+&\sum_{\lambda,j,k}
u_z dz\wedge 
\overline{u_{z_\lambda t_k}d\hat{z}_\lambda\wedge dt_k}\wedge
\phi_{j\bar{\lambda}} dt_j\wedge d\bar{z}_\lambda \wedge p^*\alpha^{m-1}\\+&\sum_{\lambda,j,k}
u_{z_\lambda t_j}d\hat{z}_\lambda\wedge dt_j\wedge \overline{u_zdz}\wedge\phi_{\lambda \bar{k}} dz_\lambda \wedge d\bar{t}_k \wedge p^*\alpha^{m-1}
\\+
&\sum_{\lambda,\mu,j,k} u_{z_\lambda t_j}d\hat{z}_\lambda\wedge dt_j\wedge \overline{u_{z_\mu t_k}d\hat{z}_\mu\wedge dt_k}\wedge\phi_{\lambda \bar{\mu}} dz_\lambda\wedge d\bar{z}_\mu \wedge p^*\alpha^{m-1}\big)
\end{align*}
which can be organized as 
\begin{equation}\label{4.8}
\begin{aligned}
e^{-\phi}\big(\sum_j|u_z|^2\phi_{j\bar{j}}+&\sum_{\lambda, j} (-1)^{n-\lambda+1}u_z \overline{u_{z_\lambda t_j}} \phi_{j\bar{\lambda}}+\sum_{\lambda, j} (-1)^{n-\lambda+1}\overline{u_z} u_{z_\lambda t_j}\phi_{\lambda\bar{j}}\\+&\sum_{\lambda,\mu,j} (-1)^{\lambda+\mu}u_{z_\lambda t_j} \overline{u_{z_\mu t_j}}\phi_{\lambda \bar{\mu}} \big)
dz\wedge d\bar{z}\wedge \frac{p^*\alpha^m}{m}.
\end{aligned}    
\end{equation}
Using the fact $\phi_{\lambda\bar{\mu}}=\delta_{\lambda \bar{\mu}}$ at $z_0$ and completing the square, we see that formula (\ref{4.8}) equals
\begin{equation}\label{3.12}
e^{-\phi}\big(\sum_{j} (\phi_{j\bar{j}}-\sum_{ \lambda}|\phi_{\lambda\bar{j}}|^2)|u_z|^2+\sum_{j,\lambda} \big|\phi_{\bar{\lambda}j}u_z+(-1)^{n-\lambda+1}u_{z_\lambda t_j}\big|^2\big)dz\wedge d\bar{z}\wedge \frac{p^*\alpha^m}{m}    
\end{equation}
which is semipositive by (\ref{4.7}). Since $u(t_0)\neq 0$, $\mathbf{u}|_{\mathcal{X}_{t_0}} $ cannot be identically zero, hence $u_z$ is nonzero somewhere; if $u_z$ is nonzero at $(t_0,z_0)$, then (\ref{3.12}) is positive. Thus we prove the claim that the $(n+m,n+m)$-form $$u'\wedge\overline{u'}\wedge \partial\bar{\partial} \phi  e^{-\phi} \wedge p^* \alpha^{m-1}$$ is semipositive at every point on $\mathcal{X}_{t_0}$ and positive at some point on $\mathcal{X}_{t_0}$. 

As a result, the right hand side in (\ref{4.5'}) is negative, so $\Lambda_\alpha  \partial \bar{\partial} H(u,u)< 0$. By (\ref{standard}), we get   $\Lambda_\alpha\Theta^V> 0$. For the semipositive case $(i\partial\bar{\partial}\phi)^{n+1}\wedge p^*\alpha^{m-1} \geq 0 $, the $(n+m,n+m)$-form $u'\wedge\overline{u'}\wedge \partial\bar{\partial} \phi  e^{-\phi} \wedge p^* \alpha^{m-1}$ is semipositive, so $\Lambda_\alpha\Theta^V\geq  0$.

\subsection{Relative Bergman kernel metric}\label{sub 3.1}
The aim of the subsection is to prove Corollary \ref{cor relative}. Let us define the relative Bergman kernel metric $B$ on $L\otimes K_{\mathcal{X}/Y}$ following \cite{BerndtssonPaun}. Given a metric $h$ on $L$, we define the Hermitian metric $H$ on $V$  as in (\ref{metric}). Let $\{u_j\}$ be any choice of orthonormal basis for $V_t$ with respect to $H$. For a section $u$ of $L\otimes K_{\mathcal{X}/Y}$ over $\mathcal{X}_t$, we define 
\begin{equation}
    B(u,u):=\frac{u\otimes \bar{u}}{\sum_j u_j\otimes \bar{u}_j}.
\end{equation}
It is not hard to verify that the definition is independent of the choice of the orthonormal basis. Moreover, we have the extremal characterization 
\begin{equation}
 \sum_j B(u_j,u_j)=\sup_{s\in V_t, H(s,s)\leq 1} B(s,s).   
\end{equation}
If we denote by $dx/dt$ a local frame of $K_{\mathcal{X}/Y}$ and by $e$ a local frame of $L$, and write $u_j=f_j dx/dt\otimes e $  and $s=s'dx/dt\otimes e$, then 
\begin{equation}\label{extreme}
 \sum_j |f_j|^2=\sup_{s\in V_t, H(s,s)\leq 1} |s'|^2\, \text{ and }\, B(\frac{dx}{dt}\otimes e,\frac{dx}{dt}\otimes e)=\frac{1}{\sum_j |f_j|^2}.  
\end{equation}

Choose local coordinates $(t,z)$ around a point in $\mathcal{X}$ such that the map $p$ is the trivial fibration $(t,z)\to t$. These coordinates give local frames $p^*(dt)$, $dt\wedge dz$, and $dt\wedge dz /p^*(dt)$ for $p^*(K_Y)$, $K_\mathcal{X}$, and $K_{\mathcal{X}/Y}$ respectively. We write 
\begin{equation}\label{psi}
  B\big(\frac{dt\wedge dz}{p^*(dt)}\otimes e, \frac{dt\wedge dz}{p^*(dt)}\otimes e\big)=e^{-\psi(t,z)}  
\end{equation}
and claim that $\Delta_\alpha\psi(t, f(t))\geq 0$ for any holomorphic map $t\to f(t)$ under the assumption of Corollary \ref{cor relative}.   

We fix a holomorphic map $t\to f(t)$ and define a local section $\xi$ of $V^*$ as follows (using the same coordinates). For $s\in V_t$, we write $s=s' dt\wedge dz /p^*(dt)\otimes e$, and define $\xi(s)=s'(t,f(t))$. The local section $\xi$ is holomorphic because if $s$ is a holomorphic section of $V$, then $\xi(s)$ is holomorphic. If we denote the dual metric on $V^*$ by $H^*$, then \begin{equation}
  \|\xi\|_{H^*}^2=\sup_{s\in V_t, H(s,s)\leq 1} |\xi(s)|^2= \sup_{s\in V_t, H(s,s)\leq 1} |s'(t,f(t))|^2= e^{\psi(t,f(t))} 
\end{equation}
where the last equality is by (\ref{extreme}) and (\ref{psi}). According to Theorem \ref{thm 4}, the dual metric $H^*$ on $V^*$ has seminegative mean curvature, and hence the logarithmic length of a holomorphic section $\log \|\xi\|_{H^*}^2$ is $\alpha$-subharmonic
by \cite[Theorem 4.1]{CofimanSemmes}, so $\Delta_\alpha\psi(t,f(t))\geq 0$.

We have shown that $\psi(t,z)$ is subharmonic on graphs. By \cite[Lemma 3.1]{wu23}, we see $\psi(t,\cdot)$ is plurisubharmonic for fixed $t$, so $\psi(t,z )+\varepsilon \|z\|^2$ is strongly plurisubharmonic for fixed $t$ and subharmonic on graphs. Using a variant of Lemma \ref{+ det}, we get 
$$\big(i\partial \bar{\partial }(\psi(t,z)+\varepsilon \|z\|^2)\big)^{n+1}\wedge p^*\alpha^{m-1}\geq 0.$$ After pushing $\varepsilon$ to 0, we have $\Theta(B)^{n+1}\wedge p^*\alpha^{m-1}\geq 0$.

\section{The subharmonic analogue of the Griffiths conjecture}\label{The subharmonic analogue of the Griffiths conjecture}

In this section, we prove the subharmonic analogue of the Griffiths conjecture. Namely, the following three statements are equivalent.  \begin{enumerate}[label=\Alph*.]

\item $E$ admits a Hermitian metric with positive mean curvature.

         \item $O_{P(E^*)}(1)$ admits a Hermitian metric whose curvature $\Theta$ is positive on every fiber and $\Theta^{r}\wedge p^*\alpha^{n-1}> 0$.

      \item
      $O_{P(E)}(1)$ admits a Hermitian metric whose curvature $\Theta$ is positive on every fiber and $\Theta^{r}\wedge q^*\alpha^{n-1}<  0$, where $q:P(E)\to X$ is the projection.
      \end{enumerate}

We begin with a standard computation in Hermitian bundles. A Hermitian metric $H^*$ on $E^*$ induces a metric $h$ on $O_{P(E^*)}(1)$. Fix a point $t_0\in X$ and let $(t_1,\dots, t_n)$ be local coordinates around $t_0$ in $X$. Let $\{e^*_1,\dots,e^*_r\}$ be a holomorphic frame of $E^*$ normal at $t_0$. Let $(\zeta_1,\dots,\zeta_r)$ be local fiber coordinates of $E^*$ with respect to the frame $\{e_1^*,\dots, e_r^*\}$. 

For a point $(t_0,[\zeta_0])$ in $P(E^*)$, we assume the local coordinates $(t_1,\dots,t_n, w_2,\dots, w_{r})$ are given by $w_i=\zeta_i/\zeta_{1}$ for $2\leq i\leq r$, and the point $(t_0,[\zeta_0])$ corresponds to the origin in the local coordinates. Then $\epsilon(t,w):=e_1^*+w_2e_2^*+\cdots+w_re^*_r$ is a local frame for $O_{P(E^*)}(-1)$. The curvature $\Theta(h^*)=-i\partial\bar{\partial}\log h^*(\epsilon,\epsilon)$ at $(t_0,[\zeta_0])$ after a straightforward computation using normality is equal to 
\begin{equation}\label{777}
   H^*(\Theta^{E^*}e_1^*,e_1^*)-\sum_{\lambda}dw_\lambda\wedge d\bar{w}_\lambda.
\end{equation}
If we denote the curvature of $(E^*,H^*)$ by $\Theta^{E^*}=\sum \Theta^{E^*}_{j\bar{k}}dt_j\wedge d\bar{t}_k$, then at $(t_0,[\zeta_0])$ using (\ref{777}) we get
\begin{equation*}
    \Theta(h^*)^{r}\wedge p^*\alpha^{n-1}=r!(n-1)!\sum_{j,k} \alpha^{j\bar{k}}H^*(\Theta^{E^*}_{j\bar{k}}e^*_1, e^*_1)(-1)^{r-1}\det(\alpha)
  \big(\bigwedge^n_{k=1} i dt_k\wedge d\Bar{t}_k\wedge \bigwedge^{r}_{\lambda=2} i dw_\lambda\wedge d\Bar{w}_\lambda\big). 
\end{equation*}
After using $\Theta(h^*)=-\Theta(h)$ and getting rid of $(-1)^{r-1}$, we get
\begin{equation}\label{555}
    \Theta(h)^{r}\wedge p^*\alpha^{n-1}=-r!(n-1)!\sum_{j,k} \alpha^{j\bar{k}}H^*(\Theta^{E^*}_{j\bar{k}}e^*_1, e^*_1)
  \det (\alpha)\big(\bigwedge^n_{k=1} i dt_k\wedge d\Bar{t}_k\wedge \bigwedge^{r}_{\lambda=2} i dw_\lambda\wedge d\Bar{w}_\lambda\big). 
\end{equation}
Using formula (\ref{555}), we see that statement A implies both B and C. 

\subsection{Special case}\label{subsec special}

Next, let us prove that statement B implies statement A for some special cases. The first one is for semistable bundles. 
\begin{lemma}\label{semistable}
    If the bundle $E$ is semistable and satisfies statement B, then the bundle $E$ satisfies statement A.
\end{lemma}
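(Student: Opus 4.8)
The plan is to exploit the fact that statement B gives us, via Theorem \ref{thm EdetE}, a family of Hermitian metrics on $S^k E \otimes \det E$ with positive mean curvature, and then to use semistability to transfer this positivity back to $E$ itself. The first step is to recall the precise consequence of Theorem \ref{thm EdetE}: under statement B, the bundle $(S^k E \otimes \det E, H)$ has positive mean curvature $\Lambda_\alpha \Theta^{S^k E \otimes \det E} > 0$ for every $k \geq 0$, where $H$ is the natural $L^2$-metric coming from the fibration $p : P(E^*) \to X$ together with the metric $h$ on $O_{P(E^*)}(1)$ raised to the appropriate power. Having positive mean curvature is equivalent to saying the metric is a subsolution of the Hermitian--Yang--Mills equation, so the first task is to phrase everything in terms of the results of \cite{li2021mean} on mean-curvature positivity and the relation to (semi)stability: a bundle carrying a metric with positive mean curvature has positive degree slope, and conversely, stability plus a topological/numerical condition allows one to deform toward an actual HYM metric.

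Next I would pass from $S^k E \otimes \det E$ down to $E$. The key algebraic input is that $E$ semistable implies $S^k E$ semistable (in characteristic zero, over a projective base; on a general compact complex manifold one works with the relevant Gauduchon-degree notion of slope), and tensoring by the line bundle $\det E$ only shifts the slope, preserving semistability. So from the metrics furnished by Theorem \ref{thm EdetE} one extracts positivity of the slope $\mu_\alpha(E)$, i.e.\ $\deg_\alpha E > 0$ in the appropriate sense — indeed statement B forces $O_{P(E^*)}(1)$ to have $\Theta^r \wedge p^*\alpha^{n-1} > 0$, which pushes forward under $p$ to positivity of the first Chern form of $E$ paired with $\alpha^{n-1}$, hence $\deg_\alpha E > 0$. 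Combined with semistability, this means every quotient of $E$ has positive slope. The mechanism to conclude should be: a semistable bundle all of whose quotients have strictly positive slope admits a Hermitian metric with positive mean curvature — this is precisely the kind of statement established in \cite{li2021mean} (an approximate-HYM / mean-curvature analogue of the Uhlenbeck--Yau and Kobayashi results), and I would cite it directly rather than reprove it. More carefully, one runs the Hermitian--Yang--Mills flow (or its mean-curvature subsolution version) starting from the metric on $E$ induced from the $S^kE \otimes \det E$ metrics; semistability guarantees the flow does not degenerate, and positivity of the slope guarantees the limiting mean curvature is positive rather than merely nonnegative.

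The step I expect to be the main obstacle is making the passage from $S^k E \otimes \det E$ to $E$ actually produce a metric with \emph{positive} mean curvature on $E$, rather than just nonnegative: positivity of the mean curvature of $S^kE\otimes\det E$ does not automatically descend to a direct summand or a subquotient, since the mean curvature of a sub/quotient bundle picks up second-fundamental-form contributions of indefinite sign in general. This is exactly where semistability must be used in an essential way — it is what rules out the "bad" destabilizing subsheaves along which the second fundamental form could kill positivity — and the clean way to package this is through the stability-vs-HYM dictionary of \cite{li2021mean} rather than through a direct curvature computation. A secondary technical point is ensuring the notion of slope/degree used (Gauduchon metric $\alpha$ on a possibly non-Kähler $X$, versus the Kähler assumption) is consistent across the cited results; since $\alpha$ is assumed Kähler here this should be unproblematic, but it is worth a sentence of care. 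The semipositive-to-positive improvement, and the reduction $S^kE$ semistable $\Rightarrow$ slope information on $E$, are the two places where I would be most careful in writing out the details.
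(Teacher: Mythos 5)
Your proposal has the correct skeleton and isolates the two ingredients the paper actually uses, but the route you sketch for the second ingredient differs from the paper's and carries unnecessary baggage. The paper's proof is short: by \cite{AdamJacob}, semistability of $E$ is \emph{equivalent} to the existence of approximate Hermitian--Yang--Mills metrics, i.e.\ for every $\varepsilon>0$ there is $H_\varepsilon$ with $\max_X|\Lambda_\alpha\Theta(H_\varepsilon)-cI|<\varepsilon$, where $c$ is an explicit positive multiple of $\int_X c_1(E)\wedge\alpha^{n-1}$; so the only thing left to check is $c>0$, which follows exactly as you say by applying Theorem \ref{thm EdetE} with $k=0$ to put a metric of positive mean curvature on $\det E$ and integrating against $\alpha^{n-1}$. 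Your alternative for the final step --- semistable plus positive slope implies a metric of positive mean curvature, via \cite[Theorem 1.4]{li2021mean} applied with $\mu_{\min}(E)=\mu(E)>0$ --- is also valid, and is in fact the mechanism the paper uses later for the general case, so either black box closes the argument. What you should drop is the entire discussion of passing from $S^kE\otimes\det E$ back to $E$: the proof never needs to descend positivity from a symmetric power or a tensor product (your own worry about second-fundamental-form contributions is precisely why one should not attempt this), nor does it need the Hermitian--Yang--Mills flow. The only use of Theorem \ref{thm EdetE} here is the $k=0$, line-bundle case, and what it supplies is a number (the positivity of the degree of $E$), not a metric to be transported; once that number is positive, the semistability dictionary does all the remaining work.
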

\begin{proof}

By \cite{AdamJacob}, semistability is equivalent to the existence of approximate Hermitian--Yang--Mills metrics; that is, given $\varepsilon>0$, there exists a Hermitian metric $H_\varepsilon$ on $E$ such that $\max_{X}|\Lambda_\alpha \Theta(H_\varepsilon)-cI|<\varepsilon$ where $I$ is the identity endomorphism on $E$ and $$c=\frac{2n\pi\int_X c_1(E)\wedge \alpha^{n-1}}{r\int_X\alpha^{n}}.$$ 
Therefore, the remaining task is to show that $c>0$. This can be done using Theorem \ref{thm EdetE}. Indeed, the bundle $ \det E$ admits a Hermitian metric $H$ with $\Lambda_\alpha \Theta(H)>0$ by the first statement in Theorem \ref{thm EdetE} with $k=0$. We see
\begin{equation}\label{degree}    \int_Xc_1(E)\wedge \alpha^{n-1}= \int_X \frac{i}{2\pi}\Theta( H)\wedge \alpha^{n-1}=\int_X \frac{i}{2\pi} \Lambda_\alpha \Theta (H)\frac{\alpha^n}{n}
\end{equation}
which is positive by $\Lambda_\alpha \Theta(H)>0$. As a consequence, the constant $c$ is positive.
\end{proof}

\begin{lemma}\label{lemma quotient}
    If $E$ satisfies statement B, then any quotient bundle of $E$ also satisfies statement B and hence has positive degree.
\end{lemma}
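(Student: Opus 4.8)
The plan is to reduce statement B to a positivity property of the line bundle $O_{P(E^*)}(1)$ and then observe that this property passes to quotients via the natural embedding of projectivized bundles. Recall that statement B for $E$ says precisely that $O_{P(E^*)}(1)$ carries a metric $h$ whose curvature $\Theta$ is positive on every fiber of $p:P(E^*)\to X$ and satisfies $\Theta^r\wedge p^*\alpha^{n-1}>0$. Let $Q$ be a quotient bundle of $E$, so there is a surjection $E\to Q\to 0$ of rank $s\le r$. Dualizing gives an injection $0\to Q^*\to E^*$ of subbundles, which induces a holomorphic embedding $\iota:P(Q^*)\hookrightarrow P(E^*)$ commuting with the projections to $X$ (write $p_Q:P(Q^*)\to X$ for the projection), and moreover $\iota^*O_{P(E^*)}(1)\cong O_{P(Q^*)}(1)$. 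This is the standard functoriality of $O(1)$ under inclusions of subbundles.

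First I would pull back the metric $h$ on $O_{P(E^*)}(1)$ along $\iota$ to obtain a metric $h_Q:=\iota^*h$ on $O_{P(Q^*)}(1)$, with curvature $\Theta_Q=\iota^*\Theta$. Next I would check the two required positivity conditions. Positivity on the fibers of $p_Q$ is immediate: the fiber of $P(Q^*)$ over $t\in X$ is the linear subspace $P(Q^*_t)\subset P(E^*_t)$, and the restriction of a positive $(1,1)$-form on a complex manifold to a complex submanifold is again positive; since $\Theta$ is positive on the fiber $P(E^*_t)$, its restriction $\Theta_Q$ is positive on $P(Q^*_t)$. For the trace condition, I would argue pointwise on $P(Q^*)$: fix a point and note that $\iota^*(\Theta^r\wedge p^*\alpha^{n-1})$ is the restriction to the $(s-1+n)$-dimensional submanifold $P(Q^*)$ of a form of bidegree $(r+n-1, r+n-1)$, hence vanishes identically when $s<r$, so this brute restriction carries no information. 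Instead the right statement to verify is $\Theta_Q^s\wedge p_Q^*\alpha^{n-1}>0$ on $P(Q^*)$, where $s=\operatorname{rank}Q$; this is exactly statement B for $Q$. To get it, at a fixed point $y\in P(Q^*)$ I would choose coordinates so that the $n$ base directions together with an $(s-1)$-dimensional transverse slice span $T_yP(Q^*)$, extended by $r-s$ extra fiber directions to span $T_{\iota(y)}P(E^*)$; since $\Theta$ is positive on all the fiber directions and $\Theta^r\wedge p^*\alpha^{n-1}>0$ picks out the full wedge $\bigwedge^n i\,dt_k\wedge d\bar t_k\wedge\bigwedge^{r-1}i\,dw_\lambda\wedge d\bar w_\lambda$ with positive coefficient, expanding $\Theta^r$ and using positivity in the extra $r-s$ fiber directions shows that the sub-wedge $\Theta_Q^s\wedge p_Q^*\alpha^{n-1}$ appearing as a factor must itself have positive coefficient. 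Thus $Q$ satisfies statement B.

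Once statement B holds for $Q$, the statement that $\deg_\alpha Q:=\int_X c_1(Q)\wedge\alpha^{n-1}>0$ follows exactly as in the proof of Lemma \ref{semistable}: by the first part of Theorem \ref{thm EdetE} applied to $Q$ (with $k=0$), the line bundle $\det Q$ carries a Hermitian metric with positive mean curvature, and then the computation \eqref{degree} with $Q$ in place of $E$ gives $\int_X c_1(Q)\wedge\alpha^{n-1}=\int_X \frac{i}{2\pi}\Lambda_\alpha\Theta\,\frac{\alpha^n}{n}>0$.

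The main obstacle I anticipate is the second positivity check: making rigorous the linear-algebra claim that the trace-type positivity $\Theta^r\wedge p^*\alpha^{n-1}>0$ for the ambient bundle forces $\Theta_Q^s\wedge p_Q^*\alpha^{n-1}>0$ after restriction, since the naive pullback of the ambient top-degree form is identically zero and one must instead extract the correct lower-degree factor. The clean way to handle this is to work at a single point with a coordinate normalization as in \eqref{4.7}--\eqref{4.8}: diagonalize $\Theta$ in the fiber directions and observe that $\Theta^r\wedge p^*\alpha^{n-1}>0$ is equivalent (by a Schur-complement identity, as used for \eqref{4.7}) to positivity of a determinant built from the base–base block, base–fiber blocks, and the positive-definite fiber–fiber block; restricting to $P(Q^*)$ simply deletes the last $r-s$ fiber rows and columns, and positive definiteness of a principal submatrix of a positive-definite-in-the-relevant-sense matrix preserves the sign of the corresponding Schur complement. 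I would phrase this as a short lemma in linear algebra and invoke it, rather than redo the computation of \eqref{2.1} in this setting.
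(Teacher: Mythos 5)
Your proposal is correct and follows the same overall strategy as the paper: pull the metric back along $\iota\colon P(Q^*)\hookrightarrow P(E^*)$, get fiber positivity by restriction, verify the trace condition pointwise, and deduce positive degree from Theorem \ref{thm EdetE} together with the computation (\ref{degree}). The one step where you genuinely diverge is the trace condition. The paper handles it by constructing a holomorphic frame of $Q^*$ that is normal for the induced Finsler metric (Lemma \ref{normal finsler} in the appendix), so that at the chosen point all mixed base--fiber second derivatives of $\phi'=\iota^*\phi$ vanish; the restricted Schur complement then collapses to $\sum_j\phi_{j\bar j}$, which is positive by the crude bound (\ref{111}). You instead keep an arbitrary frame of $E^*$ extending one of $Q^*$ and appeal to monotonicity of Schur complements under passage to principal submatrices: writing the complex Hessian of $\phi$ in blocks $\bigl(\begin{smallmatrix} B & C\\ C^* & D\end{smallmatrix}\bigr)$ with $D>0$ the fiber block, one has $C_1D_{11}^{-1}C_1^*\le CD^{-1}C^*$ because $\bigl(\begin{smallmatrix} CD^{-1}C^* & C\\ C^* & D\end{smallmatrix}\bigr)\ge 0$ and principal submatrices of semipositive matrices are semipositive; hence deleting the $r-r'$ extra fiber rows and columns only increases the trace of the Schur complement, and the needed positivity for $Q$ follows from the ambient one. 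That lemma is true and does the job, and it spares you the appendix's normal-frame construction (at the price of a slightly sharper inequality than the paper actually uses). One caveat: the heuristic in your middle paragraph, that ``expanding $\Theta^r$ and using positivity in the extra fiber directions'' forces the sub-wedge to have positive coefficient, is not a proof on its own --- the coefficient of a sub-wedge of a positive top-degree form need not be positive in general --- so the linear-algebra lemma you defer to the final paragraph is the actual content of the argument and must be stated and proved as above.
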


\begin{proof}
Let $Q$ be a quotient bundle of $E$. Since $E$ satisfies statement $B$, there is a metric $h$ on $O_{P(E^*)}(1)$ whose curvature $\Theta$ is positive one every fiber and $\Theta^r \wedge p^*\alpha^{n-1}>0$. Since $Q^*$ is a subbundle of $E^*$, the inclusion maps $P(Q_t^*)\to P(E_t^*)$ for $t\in X$ piece together to give $\iota:P(Q^*)\to P(E^*)$. The pull-back bundle $\iota^*O_{P(E^*)}(-1)$ is just $O_{P(Q^*)}(-1)$, and so there is the induced metric $\iota^*h$ on $O_{P(Q^*)}(1)$. The curvature of the metric $\iota^*h$ when restricted to a fiber $P(Q^*_t)$ is $$\Theta(\iota^*h)|_{P(Q^*_t)}=\Theta|_{P(Q^*_t)}$$
which is positive because $\Theta|_{P(E^*_t)}$ is positive.

Denote by $p'$ the projection $P(Q^*)\to X$. We claim that $\Theta(\iota^*h)^{r'}\wedge (p')^*\alpha^{n-1}>0$ where $r'=\rank Q$. We verify the claim at one point $(t_0, [\zeta_0])\in P(Q^*)$. We fix a coordinate system $(t_1,\dots, t_n)$ around $t_0$ in $X$ such that the K\"ahler metric $\alpha=i\sum_{j} dt_j\wedge d\bar{t}_j$ at $t_0$.

For the point $(t_0, [\zeta_0])\in P(Q^*)$, there exists a holomorphic frame $\{e^*_1,\dots, e^*_{r'}\}$ of $Q^*$ around $t_0$ such that it is normal in the following sense. Let $(\zeta_1,\dots,\zeta_{r'})$ be local fiber coordinates of $Q^*$ with respect to the frame $\{e_1^*,\dots, e_{r'}^*\}$. We assume the local coordinates  around $(t_0, [\zeta_0])\in P(Q^*)$ are given by $(t_1,\dots,t_n, w_2,\dots, w_{r'})$ with $w_i=\zeta_i/\zeta_{1}$ for $2\leq i\leq r'$. Then $\epsilon':=e_1^*+w_2e_2^*+\cdots+w_{r'}e^*_{r'}$ is a local frame for $O_{P(Q^*)}(-1)$. Denote $\iota^*h^*(\epsilon',\epsilon')=e^{\phi'}$. Normality means 
\begin{equation}\label{normal}
    \phi'_{{t_j}\bar{w}_\mu}=0 \text{ for } 1\leq j\leq n \text{ and } 2\leq \mu \leq r' \text{ at  $(t_0,[\zeta_0])$}.
\end{equation}
The existence of such a frame $\{e^*_j\}$ of $Q^*$ will be proved in Subsection \ref{subsection 5.1} in the appendix (it is the normal frame for strongly pseudoconvex Finsler metrics).   

We extend the frame $\{e^*_1,\dots, e^*_{r'}\}$ of $Q^*$ to a frame $\{e^*_1,\dots, e^*_r\}$ of $E^*$ (This is doable because we can always find a frame $\{s^*_1,\dots, s^*_r\}$ of $E^*$ such that $\{s^*_1,\dots, s^*_{r'}\}$ is a frame for $Q^*$, and the desired extension is $\{e^*_1,\dots, e^*_{r'},s^*_{r'+1},\dots, s^*_r\}$). Let $(\zeta_1,\dots,\zeta_r)$ be local fiber coordinates of $E^*$ with respect to the frame $\{e_1^*,\dots, e_r^*\}$. For the point $(t_0, [\zeta_0])\in P(Q^*)\subset P(E^*)$, the local coordinates in $P(E^*)$ are given by $(t_1,\dots,t_n, w_2,\dots, w_{r})$  with $w_i=\zeta_i/\zeta_{1}$ for $2\leq i\leq r$. As before, $\epsilon:=e_1^*+w_2e_2^*+\cdots+w_{r}e^*_{r}$ is a local frame for $O_{P(E^*)}(-1)$. Denote $h^*(\epsilon,\epsilon)=e^{\phi}$. Therefore, the map $\iota(t_1,\dots,t_n,w_2,\dots, w_{r'})=(t_1,\dots,t_n,w_2,\dots, w_{r'},0,\dots,0)$, $\iota^*\epsilon=\epsilon'$, and $\iota^*\phi=\phi'$.

Since
$\Theta^{r}\wedge p^*\alpha^{n-1}>0$, by a variant of formula (\ref{2.1}) and Schur's formula, we see 
\begin{equation}
\sum_j \big(\phi_{j\bar{j}}-\sum_{2\leq \lambda, \mu\leq r}\phi_{j\bar{\mu}}\phi^{\lambda\bar{\mu}}\phi_{\lambda\bar{j}}
\big)\det(\phi_{\lambda\bar{\mu}})>0 \text{ at } (t_0,[\zeta_0]),    
\end{equation}
and so 
\begin{equation}\label{111}
 \sum_j \phi_{j\bar{j}}>\sum_j\sum_{2\leq \lambda, \mu\leq r}\phi_{j\bar{\mu}}\phi^{\lambda\bar{\mu}}\phi_{\lambda\bar{j}}\geq 0 \text{ at } (t_0,[\zeta_0])  
\end{equation}
where the last inequality is due to positivity of the matrix $(\phi_{\lambda\bar{\mu}})$. On the other hand, using a variant of formula (\ref{2.1}) and normality (\ref{normal}), 
we have at $(t_0,[\zeta_0])$
\begin{equation*}
\Theta(\iota^*h)^{r'}\wedge (p')^*\alpha^{n-1}=r'!(n-1)!\sum_{j} \phi'_{j\bar{j}}\det\big[(\phi'_{\lambda\bar{\mu}})_{2\leq \lambda, \mu\leq r'}\big]  \big(\bigwedge^n_{k=1} i dt_k\wedge d\Bar{t}_k\wedge \bigwedge^{r'}_{\lambda=2} i dw_\lambda\wedge d\Bar{w}_\lambda\big)
\end{equation*}
which is positive by (\ref{111}) and the fact $\iota^*\phi=\phi'$. Hence we prove the claim that $\Theta(\iota^*h)^{r'}\wedge (p')^*\alpha^{n-1}>0$ (another way to prove the claim is to use a variant of Lemma \ref{+ det} and the fact that if $\phi$ is subharmonic on graphs, then so is $\iota^*\phi$). 

All in all,  we see that the quotient bundle $Q$ satisfies statement B. In order to show the degree of $Q$ is positive, we use Theorem \ref{thm EdetE}. By Theorem \ref{thm EdetE}, 
the bundle $ \det Q$ admits a Hermitian metric $H$ with $\Lambda_\alpha \Theta(H)>0$, and hence the degree $\int_X c_1(Q)\wedge \alpha^{n-1}$ of $Q$ is positive by the same computation as in (\ref{degree}).    
\end{proof}

\begin{lemma}\label{short}
    In the short exact sequence of vector bundles $0\to E_1\to E\to E_2\to 0$, if $E_1$ and $E_2$ satisfy statement A, then so does $E$.
\end{lemma}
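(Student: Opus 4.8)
The plan is to realize $E$, as a $C^\infty$ bundle, as the orthogonal direct sum $E_1\oplus E_2$ and to encode the failure of the sequence to split holomorphically by an extension form. Fixing any smooth splitting of $0\to E_1\to E\to E_2\to 0$ identifies $E$ with $E_1\oplus E_2$ as $C^\infty$ bundles; since $E_1$ is a holomorphic subbundle and $E_2\cong E/E_1$ the holomorphic quotient, the Dolbeault operator takes the block form $\bar\partial_E=\left(\begin{smallmatrix}\bar\partial_{E_1}&\gamma\\0&\bar\partial_{E_2}\end{smallmatrix}\right)$ for some $\bar\partial$-closed $\gamma\in A^{0,1}(X,\mathrm{Hom}(E_2,E_1))$. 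Let $H_1$ and $H_2$ be Hermitian metrics on $E_1$ and $E_2$ with positive mean curvature (this is statement A for $E_1$ and $E_2$), and for a parameter $t>0$ equip $E$ with the metric $H^{(t)}:=H_1\oplus tH_2$.

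Next I would compute the Chern curvature $\Theta^{(t)}$ of $(E,H^{(t)})$ in block form. The $H^{(t)}$-adjoint of $\gamma$ is $t^{-1}\gamma^{*}$, where $\gamma^{*}$ is the adjoint for $H_1\oplus H_2$, and the Chern connections of $(E_i,\cdot)$ are unaffected by scaling a metric by a positive constant; hence the standard extension computation gives
\[
\Theta^{(t)}=\begin{pmatrix}\Theta^{E_1}(H_1)-t^{-1}\gamma\wedge\gamma^{*} & \nabla'\gamma\\[2pt] -t^{-1}\,\overline{\partial}\gamma^{*} & \Theta^{E_2}(H_2)-t^{-1}\gamma^{*}\wedge\gamma\end{pmatrix}
\]
up to $(2,0)$- and $(0,2)$-terms, which are killed by $\Lambda_\alpha$. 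Taking a point where $\alpha$ is Euclidean and writing $\gamma=\sum_j\gamma_j\,d\bar z_j$, one finds $\Lambda_\alpha(i\gamma\wedge\gamma^{*})=-\sum_j\gamma_j\gamma_j^{*}\le 0$ on $E_1$ and $\Lambda_\alpha(i\gamma^{*}\wedge\gamma)=\sum_j\gamma_j^{*}\gamma_j\ge 0$ on $E_2$, both bounded on the compact $X$. Consequently the diagonal blocks of $\Lambda_\alpha i\Theta^{(t)}$ are $\Lambda_\alpha i\Theta^{E_1}(H_1)+O(t^{-1})$ and $\Lambda_\alpha i\Theta^{E_2}(H_2)+O(t^{-1})$.

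The crucial point is the off-diagonal coupling: $\nabla'\gamma$ does not become small in absolute size, but in an $H^{(t)}$-orthonormal frame adapted to the splitting, i.e. an $H_1$-orthonormal frame of $E_1$ together with $t^{-1/2}$ times an $H_2$-orthonormal frame of $E_2$, the matrix entries of the $(1,2)$-block become $t^{-1/2}\langle\Lambda_\alpha i\nabla'\gamma(\cdot),\cdot\rangle$, hence $O(t^{-1/2})$ uniformly on $X$, and the $(2,1)$-block is its conjugate transpose (this is forced by self-adjointness of $\Lambda_\alpha i\Theta^{(t)}$). Thus, in this frame, $\Lambda_\alpha i\Theta^{(t)}$ equals the block-diagonal matrix $\operatorname{diag}\big(\Lambda_\alpha i\Theta^{E_1}(H_1),\,\Lambda_\alpha i\Theta^{E_2}(H_2)\big)$ plus a perturbation that is $O(t^{-1/2})$ uniformly over the compact $X$. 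Since the block-diagonal part is positive definite and bounded below uniformly, for $t$ large enough $\Lambda_\alpha i\Theta^{(t)}>0$ everywhere, which is statement A for $E$.

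The main obstacle is precisely this off-diagonal term $\nabla'\gamma$: unlike in the Nakano/plurisubharmonic setting, positivity of mean curvature is not additive under extensions on the nose, and the role of the rescaling $H_2\mapsto tH_2$ is exactly to damp the coupling between $E_1$ and $E_2$ relative to the metric. Two bookkeeping points deserve care: that all the $O(t^{-1})$ and $O(t^{-1/2})$ errors are genuinely uniform on $X$ (this uses compactness, so that $\gamma$, $\nabla'\gamma$, and the mean curvatures of $H_1,H_2$ are bounded), and that the signs in the block curvature formula are tracked correctly, so that the diagonal corrections truly enter at order $t^{-1}$.
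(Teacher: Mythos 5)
Your proposal is correct and is essentially the paper's own argument: the paper's proof of Lemma \ref{short} simply defers to Umemura's Lemma 2.2, whose proof is exactly this rescaling $H_1\oplus tH_2$ of the metric on an extension, with the diagonal correction terms entering at order $t^{-1}$ and the off-diagonal second-fundamental-form coupling at order $t^{-1/2}$ in an adapted orthonormal frame. You have merely written out in detail what the paper cites, with the only change (as the paper notes) being that Nakano positivity is replaced by positivity of the mean curvature $\Lambda_\alpha i\Theta$.
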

\begin{proof}
    The proof is similar to \cite[Lemma 2.2]{Umemura}. One simply changes Nakano positivity to mean curvature positivity.
\end{proof}

\begin{theorem}\label{thm b to a}
Assume $E$  is of rank 2 or the dimension of $X$ is 2. If the degree of any quotient bundle of $E$ (including $E$ itself) is positive, then  $E$ satisfies statement A.
\end{theorem}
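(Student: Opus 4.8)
The plan is to split into two cases according to whether $E$ is semistable with respect to $\alpha$, and to reduce the unstable case to the semistable one by means of the Harder--Narasimhan filtration and Lemma~\ref{short}.

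First I would treat the semistable case. This is exactly the argument of Lemma~\ref{semistable}, except that ``$E$ satisfies statement B'' is replaced by the weaker fact ``$\int_X c_1(E)\wedge\alpha^{n-1}>0$'', which is part of the present hypothesis because $E$ is a quotient bundle of itself. Namely, by \cite{AdamJacob} a semistable bundle carries approximate Hermitian--Yang--Mills metrics $H_\varepsilon$ with $\max_X|\Lambda_\alpha\Theta(H_\varepsilon)-cI|<\varepsilon$, where $c=\frac{2n\pi\int_X c_1(E)\wedge\alpha^{n-1}}{r\int_X\alpha^n}>0$; taking $\varepsilon<c$ gives $\Lambda_\alpha\Theta(H_\varepsilon)>(c-\varepsilon)I>0$, so $E$ satisfies statement A. In particular every stable bundle of positive degree, and every line bundle of positive degree, satisfies statement A.

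Next, if $E$ is not semistable I would pass to its Harder--Narasimhan filtration $0=E_0\subsetneq E_1\subsetneq\cdots\subsetneq E_k=E$, whose graded pieces $G_i:=E_i/E_{i-1}$ are semistable and have strictly decreasing slopes $\mu(G_1)>\cdots>\mu(G_k)$. Two things must be checked: that each $G_i$ is a vector bundle, and that each $G_i$ has positive degree. Granting the first, the second follows at once: since the slopes decrease it suffices to see $\deg G_k>0$, and $G_k=E/E_{k-1}$ is a quotient bundle of $E$, so the hypothesis applies. Thus each $G_i$ is semistable of positive degree, and hence satisfies statement A by the semistable case; applying Lemma~\ref{short} to the short exact sequences $0\to E_{i-1}\to E_i\to G_i\to 0$ in succession (the base case being $E_1=G_1$) then shows $E=E_k$ satisfies statement A.

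The delicate point---and precisely where the hypothesis $\rank E=2$ or $\dim X=2$ is used---is the local freeness of the graded pieces $G_i$. The subsheaves $E_i$, being saturated in $E$, are reflexive, but the $G_i$ are a priori only torsion-free, and a torsion-free sheaf can fail to be locally free along a subset of codimension $\geq 2$. When $\dim X=2$ this is manageable: on a smooth surface reflexive sheaves are locally free, the double dual $G_i^{**}$ is a vector bundle with $c_1(G_i^{**})=c_1(G_i)$ (the quotient $G_i^{**}/G_i$ is supported on points, so degrees are unchanged), and one arranges the filtration to have locally free graded pieces. When $\rank E=2$ the Harder--Narasimhan filtration has length at most two, $E_1$ is a line bundle, and one must deal separately with the possibility that the rank-one quotient $E/E_1$ fails to be locally free. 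I expect this bookkeeping---making sure that the notion of ``quotient bundle'' genuinely applies to the $G_i$ occurring in the filtration---to be the main obstacle; in higher rank and dimension the graded pieces can be essentially non-locally-free, which is consistent with the stated scope of the theorem.
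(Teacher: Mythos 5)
Your semistable case is fine (it is essentially Lemma~\ref{semistable} with ``statement B'' weakened to ``positive degree''), but the reduction of the general case to it via the Harder--Narasimhan filtration has a genuine gap, and it is exactly the one you flag at the end without resolving. The graded pieces $G_i=E_i/E_{i-1}$ of the HN filtration are only torsion-free: on a surface the $E_i$ are reflexive hence locally free, but the quotients $G_i$ can fail to be locally free along finitely many points, and for $\rank E=2$ the rank-one quotient $E/E_1$ is of the form $L\otimes I_Z$ for an ideal sheaf of points. Statement A does not even make sense for such a sheaf, and Lemma~\ref{short} requires a short exact sequence of \emph{vector bundles}. Your proposed fix---passing to $G_i^{**}$---does not repair this: $G_i^{**}$ is indeed locally free, semistable, and of the same degree, but the composite $E_i\to G_i\hookrightarrow G_i^{**}$ is not surjective, so there is no short exact sequence $0\to E_{i-1}\to E_i\to G_i^{**}\to 0$ to feed into Lemma~\ref{short}. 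There is no general way to ``arrange the filtration to have locally free graded pieces''; making the argument work at the singular points would require an elementary-modification/gluing construction of metrics that goes well beyond Lemma~\ref{short}. So as written the proof is incomplete precisely at the step you identify as the main obstacle.

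The paper avoids the HN filtration altogether. It inducts on $\rank E$ and splits into two cases: either $E$ has a subbundle $E_1$ (with locally free quotient) satisfying statement A---in which case the quotient inherits the hypothesis, satisfies A by induction, and Lemma~\ref{short} concludes---or no subbundle of $E$ satisfies A. In the latter case one shows every saturated subsheaf has nonpositive degree (here is where $\dim X=2$ or $\rank E=2$ enters, to make reflexive subsheaves locally free so that the inductive degree argument applies), hence $E$ is \emph{stable} of positive degree, and Uhlenbeck--Yau produces an honest Hermitian--Yang--Mills metric directly on $E$, with no need to assemble metrics from filtration pieces. If you want to salvage your route, you would need either to prove local freeness of the relevant quotients in your specific situations or to switch to the paper's dichotomy, which sidesteps the torsion-free quotients entirely.
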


\begin{proof}
We consider the case $\dim X=2$ first and will follow closely the argument in \cite[Proof of Theorem 2.6]{Umemura}. We use induction on the rank of $E$. For $\rank E=1$, since a line bundle always admits a Hermitian--Yang--Mills metric (\cite[Propositions 4.1.4 and 4.2.4]{reprintDiffofcomplexbundles}), if the degree of $E$ is positive, then $E$ satisfies statement A.

Next, we assume the theorem is true for bundles of rank less than $r$. Let $E$ be a bundle of rank $r$ and assume the degree of any quotient bundle of $E$ is positive. If $E$ has a subbundle $E_1$ satisfying statement A, then $E$ satisfies statement A; indeed, the quotient bundle $E_2$ in the sequence  $0\to E_1\to E\to E_2\to 0$ satisfies statement A by the induction hypothesis, so the bundle $E$ satisfies statement A by Lemma \ref{short}.

Assume $E$ does not have a subbundle satisfying statement A. We claim that $E$ is stable. It is enough to check the slope for reflexive subsheaf $S$ with $0<\rank S< \rank E$ (\cite[Propositions 5.5.22 and 5.7.6]{reprintDiffofcomplexbundles}). Now we use the assumption that the dimension of $X$ is 2 to deduce that reflexive sheaf is locally free (\cite[Corollary 5.5.20]{reprintDiffofcomplexbundles}), so we only need to consider subbundles $S$ and we claim that $\deg S\leq 0$. We use induction on the rank of $S$. For $\rank S=1$, we know $S$ admits Hermitian--Yang--Mills metric, so $\deg S \leq 0$, otherwise $E$ would have a subbundle satisfying statement A, a contradiction. Next we assume, for subbundles of rank less than $r'$, the degree is nonpositive. Let $S$ be a subbundle of rank $r'$. Suppose $\deg S>0$. By the induction hypothesis (the second one), $S$ is stable and hence admits a Hermitian--Yang--Mills metric by \cite{uhlenbeckyau} and satisfies statement A, but  this contradicts with the fact $E$ does not have a subbundle satisfying statement A. Therefore, $\deg S\leq 0$. (another way to get a contradiction is to notice that if $\deg S>0$, then any quotient bundle of $S$ would have positive degree by the second induction hypothesis, and $S$ would satisfy statement A by the first induction hypothesis). Therefore, all the subbundles of $E$ have nonpositive degree, so $E$ is stable because $\deg E>0$. By the existence of Hermitian--Yang--Mills metric on $E$ and $\deg E>0$, the bundle $E$ satisfies statement A. This completes the proof for $\dim X=2$.

For $\rank E =2$, the proof is similar. If $E$ has a subbundle $E_1$ satisfying statement A, then the quotient bundle $E_2$ in the sequence  $0\to E_1\to E\to E_2\to 0$ satisfies statement A since $\rank E_2=1$ and $\deg E_2>0$, so the bundle $E$ satisfies statement A by Lemma \ref{short}. Assume $E$ does not have a subbundle satisfying statement A. We claim that $E$ is stable. Again, it is enough to check the slope for reflexive subsheaf $S$ with $0<\rank S< \rank E$. Since $\rank E =2$, $S$ is of rank one and hence locally free (\cite[Lemma 1.1.15]{vectorbundlesonproj}). As a line bundle, $S$ admits a Hermitian--Yang--Mills metric, if $\deg S>0$, then $S$ satisfies statement A which contradicts with the fact $E$ does not have a subbundle satisfying statement A. Therefore, $\deg S \leq 0$, and $E$ is stable because $\deg E >0$. By the existence of Hermitian--Yang--Mills metric on $E$ and $\deg E>0$, the bundle $E$ satisfies statement A.
\end{proof}
Using Lemma \ref{lemma quotient} and Theorem \ref{thm b to a}, we deduce
\begin{corollary}\label{cor b to a}
    Assume $E$  is of rank 2 or the dimension of $X$ is 2.
    If the bundle $E$ satisfies statement B, then $E$ satisfies statement A.
\end{corollary}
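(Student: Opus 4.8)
The plan is to obtain Corollary~\ref{cor b to a} by simply chaining together Lemma~\ref{lemma quotient} and Theorem~\ref{thm b to a}. Assume $E$ satisfies statement~B. The first step is to apply Lemma~\ref{lemma quotient}: it asserts that every quotient bundle $Q$ of $E$ again satisfies statement~B, and hence — through Theorem~\ref{thm EdetE} applied to $\det Q$ together with the degree identity (\ref{degree}) — has strictly positive degree $\int_X c_1(Q)\wedge \alpha^{n-1}>0$. Taking $Q=E$ itself (which is a quotient of $E$) and letting $Q$ range over all proper quotient bundles, we land exactly on the hypothesis of Theorem~\ref{thm b to a}, namely that the degree of every quotient bundle of $E$, including $E$, is positive.

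The second step is to invoke Theorem~\ref{thm b to a} under the standing assumption that $\rank E=2$ or $\dim X=2$. That theorem upgrades the degree positivity just established to the statement that $E$ admits a Hermitian metric with positive mean curvature, which is statement~A. This completes the deduction.

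I do not expect any genuine obstacle, since both ingredients are already in hand; the only thing worth a moment's care is a bookkeeping matter — Theorem~\ref{thm b to a} asks for positivity of the degrees of quotient \emph{bundles}, and Lemma~\ref{lemma quotient} delivers precisely this, with the reduction from reflexive quotient sheaves to quotient bundles already absorbed into the proof of Theorem~\ref{thm b to a} via the rank-two or surface hypothesis (locally freeness of low-rank reflexive sheaves). Hence the corollary is essentially a one-line consequence of the two preceding results.
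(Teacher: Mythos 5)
Your proposal is correct and is exactly the paper's deduction: the corollary is stated in the paper as an immediate consequence of chaining Lemma~\ref{lemma quotient} (every quotient bundle of $E$, including $E$ itself, inherits statement~B and hence has positive degree) with Theorem~\ref{thm b to a}. No further comment is needed.
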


\subsection{General case}\label{subsec general}
In this subsection, we are going to prove that statements B and C both imply A in full generality. The asymptotic expansion formulas in \cite{Mazhang} and the fibered Yang--Mills functionals in \cite{finski2024lower} play important roles in the proofs. We begin with a lemma.

\begin{lemma}\label{lem horizontal}
     Let $h$ be a Hermitian metric on the line bundle $O_{P(E^*)}(1)$ such that its curvature $\Theta$ is positive on every fiber. Define the horizontal mean curvature $$\wedge_{\alpha}\Theta_H:=\frac{\Theta_H\wedge \alpha^{n-1}}{\alpha^n},$$
    where $\Theta_H$ is the horizontal component of $\Theta$. The horizontal mean curvature $\wedge_{\alpha}\Theta_H
     $ is positive (negative) if and only if 
     $\Theta^{r}\wedge p^*\alpha^{n-1}> 0\, (<0) $.
\end{lemma}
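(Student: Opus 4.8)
The plan is to diagonalize $\Theta$ in the fiber directions and then run a dimension count. First I would exploit the hypothesis that $\Theta$ is positive on every fiber. The vertical bundle $T^F:=\ker(dp)\subset TP(E^*)$ then admits a well-defined $\Theta$-orthogonal complement $H:=(T^F)^{\perp_\Theta}$, the horizontal distribution, so that $TP(E^*)=H\oplus T^F$. Fix a point $x\in P(E^*)$, local coordinates $(t_1,\dots,t_n)$ on $X$ near $p(x)$ and fiber coordinates $(w_2,\dots,w_r)$ near $x$. Passing from the coordinate coframe $\{dt_j,dw_\mu\}$ to the coframe dual to the splitting $H\oplus T^F$ — a unipotent change of coframe of the schematic form $dw_\mu\mapsto dw_\mu+\sum_j c^\mu_j\,dt_j$ — turns $\Theta$ into the block-diagonal form $\Theta=\Theta_H+\Theta_F$, where $\Theta_F=\Theta|_{T^F}>0$ is the fiber part and $\Theta_H$ is horizontal, with coefficients given by the Schur complement $\Theta_{j\bar k}-\sum_{\mu,\nu}\Theta_{j\bar\nu}(\Theta_F^{-1})^{\bar\nu\mu}\Theta_{\mu\bar k}$. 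This is the object $\Theta_H$ in the statement, and it is well defined precisely because $\Theta_F$ is invertible on every fiber. Since the coframe change is unipotent it has determinant $1$ and so is harmless for any top-degree identity; hence for the rest of the argument I may assume $\Theta=\Theta_H+\Theta_F$ with no mixed terms.

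Next I would expand
\[
\Theta^r\wedge p^*\alpha^{n-1}=\sum_{k=0}^{r}\binom{r}{k}\,\Theta_H^{\,k}\wedge\Theta_F^{\,r-k}\wedge p^*\alpha^{n-1}
\]
and count degrees. Since $\Theta_H$ and $p^*\alpha^{n-1}$ are horizontal and $\dim_{\mathbb C}H=n$, the form $\Theta_H^{\,k}\wedge p^*\alpha^{n-1}$ vanishes unless $k+n-1\le n$, i.e.\ $k\le 1$. Since $\Theta_F^{\,r-k}$ is vertical and $\dim_{\mathbb C}T^F=r-1$, it vanishes unless $r-k\le r-1$, i.e.\ $k\ge 1$. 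Hence only $k=1$ survives and
\[
\Theta^r\wedge p^*\alpha^{n-1}=r\,\Theta_H\wedge\Theta_F^{\,r-1}\wedge p^*\alpha^{n-1}.
\]

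Finally I would factor out the sign. By the very definition of the horizontal mean curvature, the horizontal $(n,n)$-form $\Theta_H\wedge p^*\alpha^{n-1}$ equals $(\wedge_\alpha\Theta_H)\,p^*\alpha^{n}$ (the quotient of two horizontal top forms being a function), so
\[
\Theta^r\wedge p^*\alpha^{n-1}=r\,(\wedge_\alpha\Theta_H)\;p^*\alpha^{n}\wedge\Theta_F^{\,r-1}.
\]
Here $p^*\alpha^{n}\wedge\Theta_F^{\,r-1}$ is a strictly positive volume form on $P(E^*)$, being the wedge of the top horizontal form $p^*\alpha^n$ (positive since $\alpha>0$ on $X$) with the top vertical form $\Theta_F^{\,r-1}$ (positive since $\Theta_F>0$ on each fiber). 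As $r>0$, the sign of $\Theta^r\wedge p^*\alpha^{n-1}$ at each point of $P(E^*)$ coincides with the sign of the function $\wedge_\alpha\Theta_H$ there, which gives both asserted equivalences. The computation is essentially bookkeeping; the only point requiring care is to make the notion of $\Theta_H$ precise — this is exactly where the fiberwise positivity of $\Theta$ enters, guaranteeing that $H=(T^F)^{\perp_\Theta}$ is well defined and that $\Theta$ block-diagonalizes by a determinant-one coframe change — and to keep straight the two dimension inequalities; I do not anticipate a genuine obstacle beyond this.
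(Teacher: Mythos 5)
Your proof is correct and follows essentially the same route as the paper's: both reduce to a pointwise block-diagonalization of $\Theta$ into horizontal and vertical parts, after which $\Theta^{r}\wedge p^*\alpha^{n-1}$ is identified with $r$ times $\wedge_\alpha\Theta_H$ times a positive volume form. The only cosmetic difference is that the paper kills the mixed terms via the normal frame for the induced Finsler metric (Subsection \ref{subsection 5.1}) and then invokes the determinant expansion of formula (\ref{2.1}), whereas you use a unipotent coframe change (Schur complement) and a direct degree count; the content is identical.
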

\begin{proof}
    Since the curvature $\Theta$ is positive on every fiber, we can find normal coordinates around a fixed point $(t_0,[\zeta_0])$ in $P(E^*)$. Indeed, using the results and the notation from Subsection \ref{subsection 5.1}, we have at $(t_0,[\zeta_0])$
 $$\Theta=\sum\phi_{i\bar{j}}dt_i \wedge d\bar{t}_j+\sum \phi_{\lambda\bar{\mu}}dw_\lambda\wedge d\bar{w}_\mu$$
where $h^*(\epsilon,\epsilon)=e^{\phi}$. Therefore, at $(t_0,[\zeta_0])$, the horizontal component $\Theta_H=\sum\phi_{i\bar{j}}dt_i \wedge d\bar{t}_j$ and the horizontal mean curvature
$
\wedge_{\alpha}\Theta_H=\sum \alpha^{j\bar{k}} \phi_{j\bar{k}}/n    
$.
Meanwhile, according to a variant of formula (\ref{2.1}), we have at $(t_0,[\zeta_0])$
\begin{equation*}
\Theta^{r}\wedge p^*\alpha^{n-1}=r!(n-1)!\sum_{j,k} \alpha^{j\bar{k}} \phi_{j\bar{k}}\det(\phi_{\lambda\bar{\mu}})
  \det (\alpha)\big(\bigwedge^n_{k=1} i dt_k\wedge d\Bar{t}_k\wedge \bigwedge^{r}_{\lambda=2} i dw_\lambda\wedge d\Bar{w}_\lambda\big). 
\end{equation*}
So the lemma follows. 
\end{proof}

For the bundle of symmetric power $S^kE$, we consider its Harder--Narasimhan filtration and the slopes of the quotient sheaves in the filtration which are called the Harder--Narasimhan slopes. Let $\mu^k_{\min}$ and $\mu^k_{\max}$ be the minimal and maximal Harder--Narasimhan slopes.

\begin{lemma}\label{lem k times}
We have the relation $\mu^k_{\min}=k\mu^1_{\min}$ and $\mu^k_{\max}=k\mu^1_{\max}$.   
\end{lemma}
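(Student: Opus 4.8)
The plan is to induce a filtration of $S^kE$ from the Harder--Narasimhan filtration of $E$ and read off the two extreme slopes. First I would recall the standard properties of $\mu_{\max}$ and $\mu_{\min}$ for torsion-free coherent sheaves on the compact K\"ahler manifold $(X,\alpha)$, where slopes are computed with $\deg_\alpha\mathcal F=\int_X c_1(\mathcal F)\wedge\alpha^{n-1}$: passing to a subsheaf does not increase $\mu_{\max}$, passing to a quotient does not decrease $\mu_{\min}$, and if a sheaf $W$ carries a filtration whose graded pieces are semistable, then $\mu_{\max}(W)$ is at most, and $\mu_{\min}(W)$ at least, the largest, resp. smallest, slope occurring among those graded pieces. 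Since $S^1E=E$, the numbers $\mu^1_{\max}$ and $\mu^1_{\min}$ are just $\mu_{\max}(E)$ and $\mu_{\min}(E)$.

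The one non-formal ingredient I would invoke is that over a compact K\"ahler manifold the tensor product of two $\alpha$-semistable torsion-free sheaves is again $\alpha$-semistable, of slope the sum of the two slopes (the projective statement being classical, the extension to the K\"ahler setting available through the Kobayashi--Hitchin correspondence). Granting this, and working in characteristic $0$, $S^aQ$ is a direct summand — hence a subsheaf of the same slope — of the semistable sheaf $Q^{\otimes a}$, so $S^aQ$ is semistable of slope $a\mu(Q)$ whenever $Q$ is semistable; more generally each $S^{a_1}Q_1\otimes\cdots\otimes S^{a_\ell}Q_\ell$ is semistable of slope $\sum_i a_i\mu(Q_i)$.

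Now take the Harder--Narasimhan filtration $0=E_0\subset E_1\subset\cdots\subset E_\ell=E$, with semistable quotients $Q_i=E_i/E_{i-1}$ of strictly decreasing slopes $\mu(Q_1)=\mu^1_{\max}>\cdots>\mu(Q_\ell)=\mu^1_{\min}$. It induces a filtration of $S^kE$ whose associated graded sheaf is $\bigoplus_{a_1+\cdots+a_\ell=k}S^{a_1}Q_1\otimes\cdots\otimes S^{a_\ell}Q_\ell$, in which $S^kE_1=S^kQ_1$ sits as a subsheaf of $S^kE$ and $S^k(E/E_{\ell-1})=S^kQ_\ell$ appears as a quotient of $S^kE$. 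By the previous paragraph all graded pieces are semistable, with slopes ranging over $\{\sum_i a_i\mu(Q_i):\sum_i a_i=k\}$, whose maximum is $k\mu(Q_1)$ and whose minimum is $k\mu(Q_\ell)$. Hence $\mu^k_{\max}=\mu_{\max}(S^kE)\le k\mu^1_{\max}$ from the filtration bound, while $\mu^k_{\max}\ge\mu(S^kQ_1)=k\mu^1_{\max}$ since $S^kQ_1$ is a semistable subsheaf; similarly $\mu^k_{\min}\ge k\mu^1_{\min}$ from the filtration bound and $\mu^k_{\min}\le\mu(S^kQ_\ell)=k\mu^1_{\min}$ since $S^kQ_\ell$ is a semistable quotient. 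Combining the inequalities yields $\mu^k_{\max}=k\mu^1_{\max}$ and $\mu^k_{\min}=k\mu^1_{\min}$. The only place where real work is hidden is the semistability of tensor products (equivalently, that $S^k$ of a semistable sheaf is semistable of $k$ times the slope); the rest is bookkeeping with Harder--Narasimhan filtrations, and if one wishes to avoid quoting that theorem one can instead derive what is needed from approximate Hermitian--Einstein metrics, whose tensor and symmetric powers are again approximate Hermitian--Einstein.
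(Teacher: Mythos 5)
Your argument is correct, but it takes a genuinely different route from the paper's. The paper disposes of the lemma in two lines: it quotes Chen's computation of the Harder--Narasimhan slopes of $S^kE$ in the case $\dim X=1$ and then reduces the general case to curves by induction on $\dim X$ via the Mehta--Ramanathan restriction theorem. You instead argue directly in any dimension: the Harder--Narasimhan filtration of $E$ induces a filtration of $S^kE$ whose graded pieces $S^{a_1}Q_1\otimes\cdots\otimes S^{a_\ell}Q_\ell$ are semistable of slope $\sum_i a_i\mu(Q_i)$ by the tensor-product theorem for semistable sheaves, and the extreme values $k\mu^1_{\max}$ and $k\mu^1_{\min}$ are actually attained, by the subsheaf $S^kQ_1$ and the quotient $S^kQ_\ell$ respectively. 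The nontrivial input is of comparable weight on the two sides (the tensor-product semistability theorem versus Chen's curve computation plus Mehta--Ramanathan), but your route has a concrete advantage in this paper's setting: $X$ is only assumed to be a compact complex manifold with a K\"ahler form $\alpha$, whereas Mehta--Ramanathan is a statement about restriction to divisors in a very ample linear system and hence presupposes a projective polarization. Your argument, run through approximate Hermitian--Einstein metrics (whose tensor and symmetric powers remain approximate Hermitian--Einstein), works for an arbitrary K\"ahler class and stays within the circle of results the paper already invokes for semistability. The only points you should make explicit are the routine codimension-two remarks: the $Q_i$ are merely torsion-free and the natural maps such as $S^kE_1\to S^kE$ are only injective outside an analytic subset of codimension at least two, but this affects neither first Chern classes nor slopes.
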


\begin{proof}
According to \cite[Propositions 3.2 and 3.4]{chen2011computing}, the relation is true for $\dim X=1$. One can then proceed by induction on $\dim X$ and use the Mehta--Ramanathan theorem  in \cite{MehtaRamanathan} to conclude the lemma. \end{proof}

\begin{theorem}
    If the bundle $E$ satisfies statement B, then $E$ satisfies statement A.
\end{theorem}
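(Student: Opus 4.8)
The plan is to trade statement A for a numerical invariant of $E$ and then compute that invariant by passing to the symmetric powers $S^kE$. The key external ingredient is the numerical characterization of positive mean curvature from \cite{li2021mean}: a holomorphic vector bundle $F$ over the compact K\"ahler manifold $(X,\alpha)$ carries a Hermitian metric with positive mean curvature $\Lambda_\alpha\Theta^F>0$ if and only if its minimal Harder--Narasimhan slope $\mu^\alpha_{\min}(F)$ is strictly positive; equivalently, every nonzero coherent quotient sheaf of $F$ has positive $\alpha$-degree. Granting this, the proof is short.

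Since $E$ satisfies statement B, the second part of Theorem \ref{thm EdetE} produces an integer $k_0\geq 1$ for which $(S^{k_0}E,H)$ has positive mean curvature; by the ``only if'' half of the characterization, $\mu^\alpha_{\min}(S^{k_0}E)>0$. Lemma \ref{lem k times} gives $\mu^\alpha_{\min}(S^{k_0}E)=k_0\,\mu^\alpha_{\min}(E)$, so $\mu^\alpha_{\min}(E)>0$. Applying the ``if'' half of the characterization to $E$ itself then yields a Hermitian metric on $E$ with positive mean curvature, which is statement A.

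The step that carries all the real content is the invocation of \cite{li2021mean}. Its ``if'' direction proceeds along the Harder--Narasimhan filtration of $E$: the semistable graded pieces have positive slope, hence admit approximate Hermitian--Yang--Mills metrics with positive approximate mean curvature in the spirit of Lemma \ref{semistable} (via \cite{AdamJacob}), and these are assembled into a metric on $E$ with positive mean curvature through the extension argument of Lemma \ref{short}; the passage between quotient sheaves and quotient bundles, and the fact that the graded pieces need only be coherent, is handled internally there (by saturation and reflexive hulls, or by restriction to a general complete-intersection curve together with \cite{MehtaRamanathan}). I should also point out that $\mu^\alpha_{\min}(E)>0$ can be obtained directly from Lemma \ref{lemma quotient} and Mehta--Ramanathan, bypassing Theorem \ref{thm EdetE} in the first step; the route through $S^{k_0}E$ is preferable only because Theorem \ref{thm EdetE} delivers an honest Hermitian metric instead of a mere statement about degrees of quotient bundles.
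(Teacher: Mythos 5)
Your argument is correct and coincides with the second of the two proofs the paper gives: apply part 2 of Theorem \ref{thm EdetE} to get positive mean curvature on $S^kE$ for large $k$, deduce $\mu^k_{\min}>0$ from \cite[Theorem 1.4]{li2021mean}, divide by $k$ using Lemma \ref{lem k times} to get $\mu^1_{\min}>0$, and apply \cite[Theorem 1.4]{li2021mean} once more to obtain statement A. (The paper also records an alternative first proof via the horizontal mean curvature and \cite[Formula (1.10)]{finski2024lower}, but your route is exactly its second proof.)
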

\begin{proof}
We provide two proofs. For the first proof, we see that the horizontal mean curvature is positive 
by Lemma \ref{lem horizontal}. According to \cite[Formula (1.10)]{finski2024lower}, the asymptotic slope $\eta^{HN}_{\min}:=\lim_{k\to \infty } \mu^k_{\min} /k $ is positive. Since $\mu^k_{\min}=k \mu^1_{\min}$ by Lemma \ref{lem k times}, we see $\eta^{HN}_{\min}= \mu^1_{\min} $ which is positive. Hence the bundle $E$ satisfies statement A by \cite[Theorem 1.4]{li2021mean} (see also \cite[Corollary 3.6]{finski2024lower}). 

For the second proof, 
we use Theorem \ref{thm EdetE} to deduce that the bundle $S^kE$ satisfies statment A for $k$ large. So, by \cite[Theorem 1.4]{li2021mean}, the minimal slope $\mu^k_{\min} $ is positive for $k$ large. Using Lemma \ref{lem k times}, we have $\mu^1_{\min} >0$ and so the bundle $E$ satisfies statement A by \cite[Theorem 1.4]{li2021mean} again.
\end{proof}

\begin{theorem}
    If the bundle $E$ satisfies statement C, then $E$ satisfies statement A.
\end{theorem}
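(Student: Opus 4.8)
The plan is to reduce statement C about $O_{P(E)}(1)$ to statement B about $O_{P(E^*)}(1)$ by passing to the dual bundle, and then invoke the theorem just proved (B implies A). The key point is that $P(E)$ and $P((E^*)^*)$ are canonically identified, so a metric on $O_{P(E)}(1)$ is the same thing as a metric on $O_{P((E^*)^*)}(1)$; that is, the roles of $E$ and $E^*$ get swapped when we pass between $P(E)$ and $P(E^*)$. Concretely, suppose $E$ satisfies statement C: there is a metric on $O_{P(E)}(1)$ whose curvature $\Theta$ is positive on every fiber with $\Theta^r\wedge q^*\alpha^{n-1}<0$. Set $F:=E^*$. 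Then $P(F^*)=P(E)$, and $O_{P(F^*)}(1)=O_{P(E)}(1)$, so the given metric is a metric on $O_{P(F^*)}(1)$ with the same curvature $\Theta$. The fiberwise positivity is unchanged. It remains to convert the inequality $\Theta^r\wedge q^*\alpha^{n-1}<0$ into the inequality $\Theta^r\wedge p^*\alpha^{n-1}>0$ required by statement B for $F$, where $p:P(F^*)\to X$ is the projection; but $p=q$ here, so the issue is purely the sign.

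First I would resolve the sign discrepancy. The natural way is to note that $\Theta^r\wedge q^*\alpha^{n-1}$ is an $(n+r-1,n+r-1)$-form on the $(n+r-1)$-dimensional manifold $P(E)$, hence a multiple of the volume form; the sign of this multiple is governed, via Lemma~\ref{lem horizontal} applied on $P(F^*)=P(E)$, by the horizontal mean curvature $\wedge_\alpha\Theta_H$. Lemma~\ref{lem horizontal} says $\Theta^r\wedge q^*\alpha^{n-1}<0$ is equivalent to the horizontal mean curvature being negative on $P(E)=P(F^*)$. But there is a subtlety: Lemma~\ref{lem horizontal} as stated refers to $O_{P(E^*)}(1)$ with projection $p$; its proof only uses that the curvature is fiberwise positive and the local normal-coordinate computation, so it applies verbatim with $E$ replaced by $F$, giving that negativity of $\Theta^r\wedge q^*\alpha^{n-1}$ is equivalent to negativity of the horizontal mean curvature. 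This does not immediately give statement B for $F$, which demands $\Theta^r\wedge p^*\alpha^{n-1}>0$, i.e. positive horizontal mean curvature. So a direct ``swap'' does not work; instead, the correct bridge is the standard computation at the start of Section~\ref{The subharmonic analogue of the Griffiths conjecture} together with the equivalence of statements A, B, and C that the paper is establishing --- but we cannot use ``C implies B'' circularly.

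The cleaner route, and the one I would actually carry out, is this: start from the metric on $O_{P(E)}(1)$ with curvature $\Theta$ as in C, and directly produce a metric on $E$ with positive mean curvature by the \emph{same mechanism} used in the B-implies-A proof, namely showing that the Harder--Narasimhan minimal slope $\mu^1_{\min}(E)>0$ and quoting \cite[Theorem 1.4]{li2021mean}. To get $\mu^1_{\min}(E)>0$, observe that statement C for $E$ is exactly statement B for the dual in the following sense used in \cite{finski2024lower}: the negativity $\Theta^r\wedge q^*\alpha^{n-1}<0$ on $P(E)$ is, by Lemma~\ref{lem horizontal}-type reasoning on $P(E)$, the statement that the horizontal mean curvature of a fiberwise-positive metric on $O_{P(E)}(1)$ is \emph{negative}; by \cite[Formula (1.10)]{finski2024lower} this controls the asymptotic \emph{maximal} slope $\eta^{HN}_{\max}$ of the symmetric powers of $E^*$, or equivalently (passing to duals, where maxima and minima swap and signs flip) forces $\eta^{HN}_{\min}(E)>0$. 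Combined with $\mu^k_{\min}(E)=k\mu^1_{\min}(E)$ from Lemma~\ref{lem k times}, this yields $\mu^1_{\min}(E)=\eta^{HN}_{\min}(E)>0$, and then \cite[Theorem 1.4]{li2021mean} gives statement A for $E$.

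The main obstacle is bookkeeping the duality: making precise that a fiberwise-positive metric on $O_{P(E)}(1)$ with negative horizontal mean curvature corresponds, under $P(E)=P((E^*)^*)$, to a fiberwise-positive metric on $O_{P((E^*)^*)}(1)$ whose horizontal mean curvature has the sign that \cite{finski2024lower} needs in order to conclude positivity of the minimal asymptotic slope of $E$ (not of $E^*$). The sign flips --- one from $E\leftrightarrow E^*$ in the projectivization, one from the $O(1)$ versus $O(-1)$ normalization, and one from $\min\leftrightarrow\max$ under dualizing filtrations --- must be tracked carefully; once they are, the argument is a short quotation of Lemma~\ref{lem horizontal}, Lemma~\ref{lem k times}, \cite[Formula (1.10)]{finski2024lower}, and \cite[Theorem 1.4]{li2021mean}, exactly paralleling the B-implies-A proof. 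Alternatively, one could bypass \cite{finski2024lower} entirely by observing that C for $E$ literally \emph{is} B for the bundle $E^*$ after the identifications $P(E)\cong P((E^*)^*)$ and re-examining the sign normalization in formula~\eqref{555}: there the curvature of $O_{P(E^*)}(1)$ and that of $E^*$ differ by a sign, so running the computation on $P(E)$ with $O_{P(E)}(1)$ relates it to the curvature of $E$ itself with the opposite sign, turning $\Theta^r\wedge q^*\alpha^{n-1}<0$ into the positivity condition defining B for $E^*$; then the already-proven ``B implies A'' applied to $E^*$, together with the elementary fact that $E$ has positive mean curvature iff $E^*$ does (dualize the metric, noting $\Lambda_\alpha\Theta^{E^*}=-(\Lambda_\alpha\Theta^E)^{t}$, so one sign flips), completes the proof.
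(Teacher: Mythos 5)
Your main line of argument is correct and is essentially the paper's proof: apply Lemma~\ref{lem horizontal} on $P(E)$ to get negative horizontal mean curvature, invoke \cite[Formula (1.10)]{finski2024lower} to control $\eta^{HN}_{\max}$ for $E^*$, use Lemma~\ref{lem k times} to identify it with $\mu^1_{\max}(E^*)$, and conclude via \cite[Theorem 1.4]{li2021mean}; whether one then dualizes at the level of slopes (your $\mu^1_{\min}(E)=-\mu^1_{\max}(E^*)>0$) or at the level of metrics (the paper's route through a negative-mean-curvature metric on $E^*$) is immaterial. However, the ``alternative'' you sketch in the last sentences is broken: statement C for $E$ is \emph{not} statement B for $E^*$ (the sign conditions $\Theta^{r}\wedge q^*\alpha^{n-1}<0$ versus $>0$ do not match under the identification $P(E)=P((E^*)^*)$, as you yourself observed earlier), and the claimed ``elementary fact that $E$ has positive mean curvature iff $E^*$ does'' is false --- dualizing flips the sign, so positivity for $E$ is equivalent to \emph{negativity} for $E^*$. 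Drop that alternative and keep the first argument.
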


\begin{proof}
By Lemma \ref{lem horizontal}, we see that the horizontal mean curvature $\wedge_\alpha \Theta_H$ is negative on $P(E)$. According to \cite[Formula (1.10)]{finski2024lower},  the asymptotic slope $\eta^{HN}_{\max}:=\lim_{k\to \infty } \mu^k_{\max} /k$ is negative for $E^*$. Since $\mu^k_{\max}=k \mu^1_{\max}$ by Lemma \ref{lem k times}, we see $\eta^{HN}_{\max}= \mu^1_{\max} $ which is negative for $E^*$. By \cite[Theorem 1.4]{li2021mean}, the bundle $E^*$ carries a Hermitian metric with negative mean curvature. Therefore, the bundle $E$ satisfies statement A .
\end{proof}

\section{Appendix}\label{sec 5}

\subsection{}\label{subsection 5.1}

Let $E$ be a holomorphic vector bundle of rank $r$ over a compact complex manifold $X$ of dimension $n$. We denote the dual bundle by $E^*$. For a vector $\zeta\in E^*_t$, we will symbolically write $(t,\zeta)\in E^*$. Denote by $P(E^*)$ the projectivized bundle of $E^*$, and by $O_{P(E^*)}(-1)$ the tautological line bundle over $P(E^*)$. Let $p$ be the projection from $P(E^*)$ to $X$. For a vector $\zeta\in E^*_t$, we denote by $[\zeta]$ the equivalence class of $\zeta$ in $P(E^*_t)$, and we will write $(t,[\zeta])\in P(E^*)$.

When we need to do local computations, we use $(t_1,...,t_n)$ for local coordinates on $X$, and $(\zeta_1,...,\zeta_r)$ for local fiber coordinates on $E^*$ with respect to a holomorphic frame $\{e_1^*,...,e_r^*\}$. So $(t_1,...,t_n,\zeta_1,...,\zeta_r)$ is a coordinate system on $E^*$. 

Let $G$ be a real-valued function on $E^*$ such that $G$ is smooth on $E^*\setminus \{0\}$ and $G(t,c \zeta)=|c|^2G(t,\zeta)$ for $c \in \mathbb{C}$. We write 
\begin{align*}
G_\lambda=\partial G/\partial \zeta_\lambda\,,\text{      }\text{  }G_{\bar{\mu}}=\partial G/\partial \bar{\zeta}_\mu\,,\text{      }\text{  } G_{\lambda\Bar{\mu}}=\partial^2 G/\partial \zeta_\lambda \partial\bar{\zeta}_{\mu}\,, \\    G_{\lambda i}=\partial G_\lambda/\partial t_i\,,\text{      }\text{  } G_{\lambda\bar{\mu}\bar{j}}=\partial G_{\lambda\bar{\mu}}/\partial \bar{t}_j\,,\text{ etc.,}
\end{align*}  
with Greek letters $\lambda,\mu$ for the fiber direction and Latin letters $i,j$ for the base direction.  The following lemma states the existence of normal coordinates. It is mentioned in \cite{Negfinsler,ComplexFinsler} without a proof. We provide a proof here (the notation we use is slightly different from \cite{Negfinsler,ComplexFinsler}). 
\begin{lemma}\label{normal finsler}
Assume the fiberwise complex Hessian of $G$ is positive. Given a point $(t_0,\zeta_0)\in E^*$, we can find a local holomorphic frame $\{e_1^*,\dots, e_r^*\}$ such that 
\begin{align*}
    G_{\lambda\bar{\mu}}(t_0,\zeta_0)=\delta_{\lambda\bar{\mu}},\,\, G_{\lambda\bar{\mu}i}(t_0,\zeta_0)=G_{\lambda\bar{\mu}\bar{j}}(t_0,\zeta_0)=0,\\
    G_{\lambda\bar{j}}(t_0,\zeta_0)=G_{\lambda j}(t_0,\zeta_0)=0, \,\, G_{\bar{j}}(t_0,\zeta_0)=0.
\end{align*}

\end{lemma}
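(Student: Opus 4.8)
The plan is to construct the desired holomorphic frame $\{e^*_1,\dots, e^*_r\}$ by successively correcting an arbitrary initial frame, in analogy with the construction of normal coordinates for a Hermitian metric on a vector bundle, but now adapted to the $1$-homogeneous (Finsler) function $G$ on $E^*$. First I would fix an arbitrary holomorphic frame $\{s^*_1,\dots, s^*_r\}$ of $E^*$ near $t_0$ with local fiber coordinates $(\zeta_1,\dots,\zeta_r)$, and after a constant linear change of the $\zeta$'s (equivalently, a constant $GL_r(\mathbb{C})$ change of frame) arrange that $\zeta_0=(1,0,\dots,0)$ and that the fiberwise Hessian $G_{\lambda\bar\mu}(t_0,\zeta_0)=\delta_{\lambda\bar\mu}$; positivity of the fiberwise Hessian guarantees this is possible. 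Note that by the homogeneity $G(t,c\zeta)=|c|^2 G(t,\zeta)$ and Euler's relation, one automatically gets identities such as $\sum_\lambda \zeta_\lambda G_{\lambda\bar\mu}=G_{\bar\mu}$ and $\sum_\lambda \zeta_\lambda G_{\lambda j}=G_j$, which I will use repeatedly to pass between the ``radial'' identities $G_{\bar j}(t_0,\zeta_0)=0$, $G_{\lambda j}(t_0,\zeta_0)=0$ and the Hessian-type identities.

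Next I would look for the corrected frame in the form $e^*_\lambda=\sum_\mu A_{\lambda\mu}(t)\, s^*_\mu$ with $A(t)$ holomorphic, $A(t_0)=\mathrm{Id}$, and a prescribed first-order Taylor expansion $A(t)=\mathrm{Id}+\sum_j B_j(t-t_0)_j+O(|t-t_0|^2)$, where the constant matrices $B_j$ are to be chosen. Changing frame from $\{s^*\}$ to $\{e^*\}$ transforms the fiber coordinates linearly with coefficients depending holomorphically on $t$, so $G$ in the new coordinates is $\widetilde G(t,\zeta)=G(t, A(t)^{-1}\zeta)$ (up to the exact bookkeeping of which way the matrix acts, which I will fix carefully). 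Differentiating this in $t$ at $(t_0,\zeta_0)$ and using $G_{\lambda\bar\mu}(t_0,\zeta_0)=\delta_{\lambda\bar\mu}$, the mixed derivatives $\widetilde G_{\lambda j}(t_0,\zeta_0)$ and $\widetilde G_{\lambda\bar\mu i}(t_0,\zeta_0)$, $\widetilde G_{\lambda\bar\mu\bar j}(t_0,\zeta_0)$ become affine expressions in the entries of $B_j$ (and $\bar B_j$) plus the old derivatives of $G$; I then solve the resulting linear system for $B_j$. The key point is that the ``unwanted'' derivatives that a holomorphic frame change can kill are exactly $G_{\lambda j}$, $G_{\lambda\bar\mu i}$, and their conjugates $G_{\lambda\bar j}$, $G_{\lambda\bar\mu\bar j}$ (conjugation being free since $A$ is holomorphic), and the homogeneity relations then force $G_j(t_0,\zeta_0)=\sum_\lambda \zeta_\lambda G_{\lambda j}(t_0,\zeta_0)=0$ and hence $G_{\bar j}(t_0,\zeta_0)=0$ as well. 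To be precise one should note that the condition $G_{\lambda\bar\mu i}=0$ (an $n r^2$-dimensional condition) is matched against the $n r^2$ complex parameters in $\{B_j\}$, and because the Hessian $(G_{\lambda\bar\mu})$ is invertible at the point the linear map from $B_j$ to the jet of mixed derivatives is an isomorphism onto the relevant subspace, so a unique solution exists; the remaining conditions on $G_{\lambda j}$ and the radial derivatives are then automatic consequences.

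The main obstacle I anticipate is purely bookkeeping: keeping straight the interplay between the fiber-coordinate change (which mixes the $\zeta_\lambda$ among themselves with $t$-dependent coefficients) and the radial homogeneity, so that after choosing $B_j$ to normalize $G_{\lambda\bar\mu i}$ one really does get all of $G_{\lambda\bar\mu\bar j}$, $G_{\lambda\bar j}$, $G_{\lambda j}$, and $G_{\bar j}$ to vanish simultaneously, rather than discovering an over-determined system. I would handle this by writing $G$ in the new frame as $\widetilde G(t,\zeta)=G(t,\zeta - \sum_j (t-t_0)_j B_j \zeta + O(|t-t_0|^2))$, expanding to first order in $t$ and using Euler's identities $\sum_\lambda \zeta_\lambda G_{\lambda\bar\mu}=G_{\bar\mu}$, $\sum_\lambda\zeta_\lambda G_{\lambda\bar\mu i}=G_{\bar\mu i}$, etc., to check that the radial derivatives are automatically controlled once the full Hessian-mixed derivatives are; this is exactly the Finsler analogue of the classical fact that for a Hermitian metric the normal frame kills the first-order jet of the full metric matrix, and the homogeneous function $G$ is determined by its Hessian. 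Finally, I would remark that this normal frame is precisely the one invoked in the proof of Lemma~\ref{lemma quotient} (via $\iota^*\phi=\phi'$ with $\phi=-\log G$, up to the identification $h^* = $ the Finsler metric induced by $G$), which is the application motivating the lemma.
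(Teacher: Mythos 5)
Your proposal is correct and follows essentially the same route as the paper: normalize the fiberwise Hessian to the identity by a constant change of frame, then absorb the first-order $t$-jet of $G_{\lambda\bar\mu}$ into a $t$-linear holomorphic frame change $A(t)=\mathrm{Id}+\sum_j B_j t_j$ (the paper writes the solution $B_j$ explicitly as minus the Taylor coefficients of the Hessian rather than invoking invertibility to solve a linear system), and finally derive the vanishing of $G_{\lambda\bar j}$, $G_{\lambda j}$, $G_{\bar j}$ from Euler's identities for the homogeneous $G$ exactly as you describe.
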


\begin{proof}

We begin with an arbitrary frame $\{s^*_1,\dots, s^*_r\}$ around $t_0$, and we denote the fiber coordinates by $(\eta_1,\dots, \eta_r)$. Since the fiberwise complex Hessian of $G$ is positive, after applying to the frame $\{s^*_1,\dots, s^*_r\}$ a linear transformation of constant coefficients, we may assume $G_{\eta_\lambda \bar{\eta}_\mu}(t_0,\zeta_0)=\delta_{\lambda\mu}$. So, we have an expansion 
\begin{equation}
    G_{\eta_\lambda \bar{\eta}_\mu}=\delta_{\lambda\mu}+\sum_{i} a_{i \lambda\mu}t_i+\sum_{i} a'_{i \lambda\mu}\bar{t}_i+O(|t|^2)+O(|\eta|)+O(|\eta||t|),
\end{equation}
where $\overline{a_{i \lambda\mu}}=a'_{i \mu \lambda}$. We define a new frame $\{e^*_1,\dots, e^*_r\}$ by setting $e^*_\lambda=\sum_\mu A_{\lambda \mu}(t) s^*_\mu$ where $A_{\lambda \mu}(t)=\delta_{\lambda \mu}-\sum_i a_{i \lambda \mu } t_i$. We denote the fiber coordinates with respect to the frame $\{e^*_1,\dots, e^*_r\}$ by $(\zeta_1,\dots,\zeta_r)$. Then by a straightforward computation and the fact $O(|\eta|)=O(|\zeta|)+O(|t||\zeta|)$ we get
\begin{align*}
 G_{\zeta_\lambda \bar{\zeta}_\mu}=\sum_{\alpha,\beta}G_{\eta_\alpha\bar{\eta}_\beta}A_{\lambda\alpha}\bar{A}_{\mu\beta}= \delta_{\lambda\mu}+O(|t|^2)+O(|\zeta|)+O(|\zeta||t|)
\end{align*}
which proves the first two sets of equalities in the lemma. For the last two sets of equalities, we take the homogeneity $G(t,c \zeta)=|c|^2G(t,\zeta)$ and differentiate with respect to $c$ to get 
\begin{equation}\label{first}
\sum_\mu G_\mu(t,c\zeta) \zeta_\mu=\bar{c}G(t,\zeta),    
\end{equation}
then differentiate with respect to $\bar{\zeta}_\lambda$
to get 
\begin{equation}\label{second}
    \sum_{\mu} G_{\mu\bar{\lambda}}(t,c\zeta)\zeta_\mu=G_{\bar{\lambda}}(t,\zeta).
\end{equation}
Finally, differentiating one more time with respect to $t_i $, we get $G_{\bar{\lambda}i}(t_0,\zeta_0)=0$ using $G_{\lambda\bar{\mu}i}(t_0,\zeta_0)=0$. That $G_{\lambda j}(t_0,\zeta_0)=0$ is proved similarly. Moreover, combining (\ref{first}) and (\ref{second}), we see $$\sum_{\mu,\lambda} G_{\mu\bar{\lambda}}(t,\zeta)\zeta_\mu \bar{\zeta}_\lambda=G(t,\zeta).$$
Differentiating with respect to $t_i$, we get $G_{i}(t_0,\zeta_0)=0$.
\end{proof}

Now, a metric $h^*$ on the line bundle  $O_{P(E^*)}(-1)$ defines a function $G$ on $E^*$ as follows  
$$G(t,\zeta):=h^*_{(t,[\zeta])}(\zeta,\zeta).$$
The function $G$ satisfies $G(t,c \zeta)=|c|^2G(t,\zeta)$ for $c \in \mathbb{C}$ (it is a Finsler metric). 

If the restriction of the curvature of $h^*$ to each fiber ${P(E_t^*)}$ is negative, then the fiberwise complex Hessian of $G$ is positive (see \cite[Lemma 3]{wu_2022}). Therefore, we can use Lemma \ref{normal finsler} to get a frame $\{e_1^*,\dots, e_r^*\}$ for a point $(t_0,\zeta_0)$ in $E^*$. Let $(\zeta_1,\dots,\zeta_r)$ be local fiber coordinates of $E^*$ with respect to the frame $\{e_1^*,\dots, e_r^*\}$. We assume the local coordinates around $(t_0, [\zeta_0])\in P(E^*)$ are given by $(t_1,\dots,t_n, w_2,\dots, w_{r})$  with $w_i=\zeta_i/\zeta_{1}$ for $2\leq i\leq r$. Then $\epsilon:=e_1^*+w_2e_2^*+\cdots+w_{r}e^*_{r}$ is a local frame for $O_{P(E^*)}(-1)$. Denote $h^*(\epsilon,\epsilon)=e^{\phi}$. So, $$\phi(t_1,\dots,t_n, w_2,\dots, w_{r})=\log G(t,\epsilon)=\log G(t, 1,w_2,\dots, w_r).$$
We simply compute $$\phi_{t_i \bar{w}_\mu}=\frac{G_{t_i \bar{\zeta}_\mu}G-G_{t_i}G_{\bar{\zeta}_\mu}}{G^2}$$
which is zero at $(t_0,\zeta_0)$ by Lemma \ref{normal finsler}. The above results are applied to $P(Q^*)$ with the metric $\iota^*h^*$ on $O_{P(Q^*)}(-1)$ in the proof of Lemma \ref{lemma quotient}.

\subsection{}\label{subsection 5.2}
Let $E$ be a holomorphic vector bundle of rank $r$ over a compact complex manifold $X$ of dimension $n$. Let $\alpha$ be a K\"ahler form on $X$ and $p$ be the projection from $P(E^*)$ to $X$. The goal is to show that $P(E^*)$ is K\"ahler.

Take an arbitrary Hermitian metric $H$ on $E^*$. The Hermitian metric $H$ induces a Hermitian metric $h$ on $O_{P(E^*)}(1)$. Fix a point $t_0\in X$ and let $(t_1,\dots, t_n)$ be local coordinates around $t_0$ in $X$. Let $\{e^*_1,\dots,e^*_r\}$ be a holomorphic frame of $E^*$ normal at $t_0$. Let $(\zeta_1,\dots,\zeta_r)$ be local fiber coordinates of $E^*$ with respect to the frame $(e_1^*,\dots, e_r^*)$. 

For a point $(t_0,[\zeta_0])$ in $P(E^*)$, we assume the local coordinates $(t_1,\dots,t_n, w_2,\dots, w_{r})$ are given by $w_i=\zeta_i/\zeta_{1}$ for $2\leq i\leq r$, and the point $(t_0,[\zeta_0])$ corresponds to the origin in the coordinates. By the same computation as in (\ref{777}), the curvature $\Theta(h)$ at $(t_0, [\zeta_0])$ is of the form 
$$\sum_{i,j} A_{i\bar{j}} dt_i\wedge d\Bar{t}_j+ \sum_{\lambda}  dw_\lambda \wedge d\Bar{w}_\lambda;$$ 
there is no mixed term. Meanwhile, if the pull back form $p^*\alpha$  is 
$$p^*\alpha=  \sum_{i,j} \alpha_{i\bar{j}} dt_i \wedge d\Bar{t}_j,$$ then
for a positive integer $k$, the form $kp^*\alpha + \Theta(h)$ at $(t_0, [\zeta_0]) \in P(E^*)$ is 
$$  \sum_{i,j}  \big(k \alpha_{i\bar{j}} +A_{i\bar{j}}\big) dt_i\wedge d\Bar{t}_j+ \sum_{\lambda} dw_\lambda \wedge d\Bar{w}_\lambda.    $$
Since $(\alpha_{i\bar{j}})$ is positive, we can make $\big(k\alpha_{i\bar{j}} +A_{i\bar{j}}\big)$ positive by taking $k>k_0$ for some integer $k_0$ depending on the point $(t_0, [\zeta_0])$.

Therefore, for $k>k_0$, the form $kp^*\alpha +\Theta(h)$ is positive at  $(t_0, [\zeta_0])\in P(E^*)$, hence in a neighborhood of this point. Since $P(E^*)$ is compact, by a compactness argument, we can find an integer $l_0$ such that  $kp^*\alpha +\Theta(h)$ is positive on $P(E^*)$ for $k>l_0$. So, we have a K\"ahler form $kp^*\alpha +\Theta(h)$ on $P(E^*)$.

\bibliographystyle{amsalpha}
\bibliography{Dominion}

\textsc{Erdős Center, HUN-REN Rényi Institute}

\texttt{\textbf{wuuuruuu@gmail.com}}

\end{document}